\documentclass[11pt,letterpaper]{amsart}
\usepackage[T1]{fontenc}
\usepackage[utf8]{inputenc}
\usepackage{lmodern}
\usepackage{amsmath,amssymb,amsthm}
\usepackage[letterpaper,margin=1in]{geometry}
\usepackage[expansion=false]{microtype}
\usepackage[hidelinks]{hyperref}
\theoremstyle{plain}
\newtheorem{theorem}{Theorem}[section]
\newtheorem{proposition}[theorem]{Proposition}
\newtheorem{lemma}[theorem]{Lemma}
\newtheorem{corollary}[theorem]{Corollary}
\theoremstyle{definition}
\newtheorem{definition}[theorem]{Definition}
\newtheorem{remark}[theorem]{Remark}

\newcommand{\Fp}{\mathbb{F}_p}
\newcommand{\Fpbar}{\overline{\mathbb{F}}_p}
\newcommand{\AG}{\mathrm{AG}}
\newcommand{\PG}{\mathrm{PG}}
\newcommand{\PP}{\mathbb{P}}
\newcommand{\AAA}{\mathbb{A}}
\newcommand{\NS}{\mathcal{N}}
\newcommand{\BB}{\mathcal{B}}
\newcommand{\Dick}{\mathsf{D}}
\DeclareMathOperator{\spec}{sp}
\DeclareMathOperator{\Val}{Val}

\title[The cylinder conjecture]{The weak cylinder conjecture for polynomial graphs of bounded degree}
\author{Daniele Bartoli}
\address{Dipartimento di Matematica e Informatica,
Universit\`a degli Studi di Perugia,
Via Vanvitelli 1, 06123 Perugia, Italy}
\email{daniele.bartoli@unipg.it}

\author{Alessandro Giannoni}
\address{Dipartimento di Matematica e Applicazioni ``Renato Caccioppoli'',
Universit\`a degli Studi di Napoli Federico II,
Complesso Universitario di Monte Sant'Angelo, Via Cintia,
80126 Napoli, Italy}
\email{alessandro.giannoni@unina.it}
\date{}
\subjclass[2020]{Primary 51E20; Secondary 11T06, 14G15}
\keywords{Cylinder conjecture, finite geometry, directions, exceptional polynomials, polynomial decomposition}
\hypersetup{pdftitle={The weak cylinder conjecture for polynomial graphs of bounded degree},pdfauthor={Daniele Bartoli, Alessandro Giannoni}}

\begin{document}

\begin{abstract}
Ball's weak cylinder conjecture asserts that a set of $p^2$ points in
$\AG(3,p)$ with at least $p$ non-determined directions is a union of
$p$ parallel lines. We prove the conjecture for sets admitting a
polynomial graph representation of total degree at most $D$, provided
that $p>D^{9/2}$. More precisely, a non-cylindrical graph of degree
$D\ge3$ satisfies
$|\mathcal N_S|\le p+1-\lceil p/(D-1)\rceil$ under the same hypothesis.
This bound is attained when $p\equiv1\pmod{D-1}$.
The proof relates non-determined directions to permutation polynomials
on parallel affine lines. A rigidity lemma for moving line restrictions,
together with a reconstruction of their compositional components,
bounds the number of contributing directions uniformly in $p$.
Directions with affine restrictions are treated separately; all other
contributions are bounded using geometric indecomposability and the
Weil bound for plane curves.
\end{abstract}

\maketitle

\section{Introduction}\label{sec:intro}

Let $p$ be a prime. A direction in $\AG(3,p)$ is a point of the
plane at infinity $\pi_\infty=\PG(2,p)$. A set $S$ determines a direction
if two distinct points of $S$ lie on a line of that direction. Write
$\NS_S$ for the set of directions not determined by $S$. A
\emph{cylinder} is a union of $p$ distinct parallel affine lines.
Ball's weak cylinder conjecture \cite{Ball08,Ball12} states that
a set of $p^2$ points with $|\NS_S|\ge p$ is a cylinder.

If $|S|=p^2$ and a direction is not determined, choosing it as the
vertical direction represents $S$ as the graph of a function
$f\colon\Fp^2\to\Fp$. We identify this function with its unique
polynomial representative of degree at most $p-1$ in each variable.
The degree considered below is the total degree of this representative
in the chosen coordinates. Our hypothesis is that $S$ admits such a
representation of bounded degree.

\begin{theorem}\label{thm:all-degrees}
Let $D\ge0$ and let $S\subseteq\AG(3,p)$ admit a graph representation
by a polynomial of total degree at most $D$. If $D\ge3$ and
$p>D^{9/2}$, then
\[
 |\NS_S|\ge p\qquad\Longrightarrow\qquad S\text{ is a cylinder}.
\]
For $D\le2$ the same conclusion holds for every odd prime.
\end{theorem}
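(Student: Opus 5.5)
The plan is to translate non-determined directions into permutation conditions and bound them uniformly in $p$. Write $S=\{(x,y,f(x,y))\}$ with $\deg f\le D$. The vertical direction $(0:0:1)$ is never determined. A non-vertical direction $(a:b:c)$ with $(a,b)\ne(0,0)$ is not determined exactly when, for every affine line $\ell$ in $\Fp^2$ parallel to $(a,b)$, the map $t\mapsto f(P+t(a,b))-ct$ is injective on $\Fp$. Parametrise $\ell$ by $P=P_0+s w$, with $w$ a fixed complementary vector, and set $g_s(t)=f(P_0+sw+t(a,b))$. The condition then reads: $g_s(t)-ct$ is a permutation polynomial of $\Fp$ for every $s\in\Fp$. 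So $|\NS_S|-1$ counts pairs (planar direction $[a:b]$, slope $c$) with this property. For a fixed $[a:b]$, at most one $c$ can work for a non-affine restriction, since a sum over $t$ separates slopes. We therefore need at least $p-1$ planar directions $[a:b]$, out of $p+1$, each admitting a good $c$, and we aim to show that this forces $f$ to be of the form $h(\alpha x+\beta y)+\gamma x+\delta y$ with $h$ a permutation-type function. This is the cylinder shape.

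For the non-affine directions, the key input is Hermite/Dickson together with the Weil bound. If $\deg_t(g_s-ct)=d$ with $1<d\le D$ and $p>D^{9/2}$, then $p$ is large enough that every permutation polynomial of degree $d$ is exceptional. By Fried's classification in degree $<p$, an exceptional polynomial is a composition of linear maps with Dickson polynomials $\Dick_n(\cdot,a)$, $\gcd(n,p^2-1)=1$, and monomials $x^n$ with $\gcd(n,p-1)=1$. The curve $(g_s(t)-g_s(u))/(t-u)=0$ has no absolutely irreducible component defined over $\Fp$, and this holds for all $s$. I would then prove a rigidity lemma: a family $g_s(t)$, polynomial in $s$, whose members are all exceptional must have the shape $g_s(t)=L_2(s,\phi(L_1(s,t)))$, where $\phi$ is a fixed Dickson or monomial component and $L_1,L_2$ are affine in the inner variable, with coefficients polynomial in $s$. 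To prove it, reconstruct the compositional factors via Ritt theory/decomposition over $\Fp(s)$ and use the Weil bound on $\Fp(s)$-components: otherwise some $s$ yields an absolutely irreducible factor and hence a collision. Since the degree in $t$ is $d$ for generic $s$, the condition $\gcd(d,p-1)=1$ (or its Dickson analogue) restricts which directions $[a:b]$ can have degree-$d$ restrictions.

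Counting is then done by degree. Let $F_d$ be the top homogeneous part of $f$, of degree $d=\deg f\ge 2$. The leading $t$-coefficient of $g_s$ is $F_d(a,b)$, so directions with $F_d(a,b)=0$ are those where the restriction degree drops, and there are at most $d$ of them unless $F_d$ vanishes identically. For the remaining directions, exceptionality requires the degree-$d$ restriction to be exceptional uniformly in $s$. Through the rigidity lemma, this forces $F_d=\lambda\,\ell(x,y)^d$ for a linear form $\ell$, and forces $f-h(\ell)$ to be a lower-degree term that again must be handled. If $f$ is not a function of one linear form plus a linear part, the rigidity lemma caps the number of contributing directions by roughly $p/(D-1)$ missing ones. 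This comes from the condition $\gcd(d',p-1)=1$ on the effective degrees, giving the stated quantitative bound $|\NS_S|\le p+1-\lceil p/(D-1)\rceil<p$. Directions where the restriction is affine are treated separately: they force $f$ to be affine along $[a:b]$, and having $\ge 2$ such directions makes $f$ affine, which is again a cylinder. For $D\le 2$, handle things directly: a quadratic restriction $\alpha t^2+\dots$ is never a permutation for odd $p$, so all good directions are affine ones, and a direct computation with quadratic forms finishes the case.

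The main obstacle is the rigidity lemma, that is, uniformly controlling how the exceptional compositional structure can vary with the line parameter $s$ and with the direction $[a:b]$. This is where $p>D^{9/2}$ enters, through the Weil bound applied to absolutely irreducible components of degree up to $D^2$ whose genus is $O(D^4)$. I would first try to work generically over $\Fp(s)$ and specialise, and control the bad $s$ via resultants of degree $O(D^2)$.
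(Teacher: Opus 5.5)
Your translation into simultaneous permutation conditions, the use of Weil to get exceptionality, the target shape $h(L)+\varphi$ and the case $D\le2$ are all fine. What is missing is the step that carries the proof: a bound, independent of $p$, on the number of planar directions $[a:b]$ with $F_D(a,b)\ne0$ that carry an omitted slope.

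Your rigidity lemma concerns the restrictions $g_s$ in a \emph{single} direction. At best it gives a normal form $f(u,s)=a\chi_u(s+b(u))+\lambda s+c(u)$ for that direction, as in Proposition~\ref{prop:structure}, with a fixed type but parameters that may move with the line: $\Dick_5(t,s^2)$ is exceptional for every $s$ when $p\equiv\pm2\pmod 5$. It does not limit how many directions admit such a form. The gcd conditions cannot do this either, because every direction off the roots of $F_D$ has restriction degree $D$, and $\gcd(D,p-1)$, $\gcd(D,p^2-1)$ depend only on $p$ and $D$. Nor does one such direction force $F_D=\lambda\ell^D$. For $p\equiv2\pmod3$, every restriction $y\mapsto(x_0+y)^3+x_0^3$ of $f=(x+y)^3+x^3=(2x+y)(x^2+xy+y^2)$ permutes $\Fp$, yet $f$ is not a cylinder, since its leading form has three distinct linear factors.

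The missing idea is to make the slope an indeterminate $m$. Put $P_m=f(x,u+mx)/f_D(1,m)$ and, for each type $\tau$, compare $P_m$ with the canonical reconstruction $R_\tau(P_m)$ of Lemma~\ref{lem:reconstruction}. That reconstruction uses only the coefficients of $x^{D-1},\dots,x^2$ and commutes with specialisation. Pigeonhole over the $t(D)$ types, followed by interpolation in $u$, turns ``$m_0$ carries an omitted value'' into the vanishing at $m_0$ of polynomials in $m$ of degree at most $(D-1)^2$. If for some $\tau$ these all vanish identically, Lemma~\ref{lem:shear-rigidity} applies over $\Fp(m)$ and gives $f=h(L)+\varphi$. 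Its engine is the derivation $\partial_m-x\partial_u$, which annihilates $f(x,u+mx)$. So a non-cylinder has at most $t(D)(D-1)^2+1$ such directions.

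The per-direction count and the affine case also need repair. Your claim that a non-affine direction admits at most one good slope is unsupported. The Hermite sums $\sum_t(g(t)-ct)^k$ are polynomials of degree at most $k$ in $c$ and vanish identically whenever $k\deg g<p-1$, so they separate nothing. The paper instead proves $\mu(v)\le r(v)-2\le D-2$ (Proposition~\ref{prop:small-pencil}), using geometric non-compositeness of the difference quotient, the Weil bound and the Stein--Lorenzini inequality. This bound is needed also for the at most $D$ roots of $F_D$.

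Your reduction to ``at least $p-1$ contributing planar directions'' also fails when some direction has affine restrictions, because one such direction alone can contribute almost $p$ values. For $x^{D-1}y$ it contributes $(p-1)(1-1/\gcd(D-1,p-1))$, so ruling out two such directions does not settle this case. You must show, as in Proposition~\ref{prop:affine-restrictions}, two things:
\begin{itemize}
\item $f=c_1(u)s+c_0(u)$ with $c_1$ nonconstant, since otherwise $A_v$ is constant and $S$ is a cylinder;
\item every other direction is surjective, because for $w=(a,b)$ with $a\ne0$ the coefficient of $s$ in $A_w$ is $(c_1(u+ta)-c_1(u))/t$, which is nonzero for suitable $u,t$.
\end{itemize}
Then $|\NS_S|=1+p-|c_1(\Fp)|\le p+1-\lceil p/(D-1)\rceil$. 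This value-set count, not a gcd condition, is the source of the quantitative bound.

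Finally, $p>D^{9/2}$ does not come from a genus estimate. The hypothesis $p>D^4$ handles exceptionality and the point counts, and the extra factor $D^{1/2}$ ensures $1+(D+B(D))(D-2)<p$ via $t(D)\le D^{3/2}$.
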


For a non-cylindrical graph of actual degree $D\ge3$, we prove the
stronger estimate
\begin{equation}\label{eq:intro-bound}
 |\NS_S|\le p+1-\left\lceil\frac p{D-1}\right\rceil
 \qquad (p>D^{9/2});
\end{equation}
see Theorem~\ref{thm:quant}. In particular,
$|\NS_S|\le(1-1/(D-1))p+1$.
The bound in \eqref{eq:intro-bound} is attained whenever
$p\equiv1\pmod{D-1}$: for $f=x^{D-1}y$, Theorem~\ref{thm:sharp} gives
\[
 |\NS_S|=1+(p-1)\left(1-\frac1{\gcd(D-1,p-1)}\right).
\]
There are infinitely many such primes by Dirichlet's theorem.
For graphs of actual degree $D$ with $D$ even or prime, the cylinder conclusion already follows from
$p>D^4$; a degree-dependent bound is given in
Corollary~\ref{cor:threshold}.

The proof uses the difference quotients
\[
 A_v(x,y,t)=\frac{f(x+tv_1,y+tv_2)-f(x,y)}t,
 \qquad v=(v_1:v_2)\in\PP^1(\Fp),\quad t\ne0.
\]
The number of non-determined directions is one plus the sum, over $v$,
of the numbers of values omitted by $A_v$.
A value $\lambda$ is omitted precisely when the restriction of $f$ to
every affine line of direction $v$, after subtraction of $\lambda s$,
permutes $\Fp$. If these restrictions are affine, then
$f(u,s)=c_1(u)s+c_0(u)$ in adapted coordinates. Unless the graph is a
cylinder, every other direction has a surjective difference quotient.
If the restrictions have degree at least two, their difference
quotients are geometrically non-composite, and the number of omitted
values is at most $D-2$ for $p>D^4$. It remains to bound the number of
directions which contribute. The roots of the leading binary form
account for at most $D$ directions. For the other directions, the
restrictions are exceptional permutation polynomials of degree $D$.
Their classification, a reconstruction of their components, and a
rigidity lemma give a bound independent of $p$.

The direction problem originates in R\'edei's polynomial method
\cite{Redei}. The planar theory was developed in
\cite{LS,BBBSS,Ball03}; higher-dimensional polynomial and structural
results appear in \cite{BL04,BL06,Ball08,Sziklai06}, with surveys in
\cite{Ball12,DB14}.
In particular, Ball and Lavrauw \cite{BL04,BL06} prove that a set of
$p^2$ points with at least $p$ non-determined directions meets every
affine plane in a multiple of $p$ points. This divisibility conclusion
does not identify the set as a cylinder. Under a bound on the total
degree, Theorem~\ref{thm:all-degrees} gives that further conclusion,
and \eqref{eq:intro-bound} bounds the number of non-determined
directions of a non-cylindrical graph.

There is also a stronger version, the \emph{strong cylinder
conjecture}: a set of $p^2$ points of $\AG(3,p)$ meeting every plane in
$0\bmod p$ points is a cylinder. It implies the weak form, since
$|\NS_S|\ge p$ forces every plane to meet $S$ in $0\bmod p$ points
\cite[Theorem~1.3]{Ball08}; both conjectures are stated in
\cite[Conjectures~4.5 and~4.6]{DB14}.
De Beule, Demeyer, Mattheus and Sziklai \cite{DDMS} establish a relaxed
version for $p\le13$ under additional direction hypotheses. Kurz and
Mattheus \cite{KM} correct the earlier argument for $p=5$ and prove
the case $p=7$, using divisible codes, incidence counting and
enumeration; see \cite{Ward,Mattheus22}. Kiss, Mark\'o, Nagy and Somlai
\cite{KMarko} obtain span theorems for $p$-divisible sets and multisets,
which do not assert that such a set is a single cylinder. The results
of De Beule, Sziklai and Tak\'ats \cite{DST} concern sets of size
$q^{n-1}-\varepsilon$.

The missing-slope criterion in terms of permutation polynomials is
classical; see \cite{BG09,BGS08,BZ}. The results of Ball and G\'acs
\cite{BG09} concern degrees of reduced powers of a univariate
polynomial and the corresponding planar direction problem.
In the present setting the same omitted value must satisfy the
permutation condition on all $p$ parallel lines. Our argument uses
these simultaneous conditions to obtain polynomial identities in the
line parameter. The resulting rigidity statement, Lemma~\ref{lem:shear-rigidity},
applies also when the restrictions are decomposable. It concludes that
a fixed compositional type in a moving direction forces $f$ to be a
polynomial in one linear form, up to an affine term. This conclusion
differs from the preservation of indecomposability under generic line
restriction proved in \cite[Theorem~1.2]{BCD}, although the change of
variables and the differential identity have the same origin.

Theorem~\ref{thm:all-degrees} concerns total degree rather than
intersection multiplicities or the degrees of reduced powers.
It applies uniformly for $D=o(p^{2/9})$; it does not cover arbitrary
reduced graph polynomials, whose total degree can be $2(p-1)$.
The quantitative estimate \eqref{eq:intro-bound} also applies to
non-cylindrical graphs without assuming $|\NS_S|\ge p$.

Sections~\ref{sec:budget}--\ref{sec:pivot} give the geometric reduction
and the permutation criterion. Section~\ref{sec:count} proves the
rigidity lemma and counts the contributing directions.
Section~\ref{sec:size} bounds the number of values omitted in a fixed
direction, and Section~\ref{sec:sum} proves the main results.
An independent criterion for powers among the leading forms of the
difference quotients is retained in Appendix~\ref{sec:appendix}.

\medskip\noindent\textbf{Conventions.}
From Section~\ref{sec:budget} onwards $p$ is an odd prime and $D<p$.
A form is a homogeneous polynomial. A nonzero form is a perfect
$e$-th power up to a scalar if it equals $cU^e$, where $c\ne0$,
$e\ge2$ and $U$ is a nonconstant form. This condition is considered
over an algebraic closure unless a field is specified.
For variables $w_i$ of weight $i$, a polynomial is weighted homogeneous
of weight $j$ if each monomial $w_{i_1}\cdots w_{i_k}$ occurring in it
satisfies $i_1+\cdots+i_k=j$.
The variables $s$ and $u$ denote, respectively, the coordinate along a
line and the parameter indexing the parallel lines.

\section{Directions and difference quotients}\label{sec:budget}

Let $S\subseteq\AG(3,p)$ with $|S|=p^2$ and $\NS_S\ne\varnothing$, let
$m_0\in\NS_S$ and choose coordinates with $m_0=(0:0:1)$. Each of the
$p^2$ lines of direction $m_0$ meets $S$ in at most one point, hence by
counting in exactly one, so $S$ is the graph of a unique
$f\colon\Fp^2\to\Fp$, identified with its reduced representative.
Constant graphs, the zero graph included, are planes and hence
cylinders; from now on $f$ is nonconstant and $D:=\deg f\ge1$.
We assume $D<p$; this follows from the hypotheses of
Theorem~\ref{thm:all-degrees} in every nonconstant case under consideration.
The directions other than $m_0$ are the points $(v_1:v_2:\lambda)$.
For fixed $v=(v_1:v_2)\in\PP^1(\Fp)$, they lie on the projective line
\[
 \ell_v=\{(av_1:av_2:b):(a:b)\in\PP^1(\Fp)\}
 =\{m_0\}\cup\{(v_1:v_2:\lambda):\lambda\in\Fp\}.
\]

\begin{definition}\label{def:Av}
For $v\in\Fp^2\setminus\{0\}$ set
$A_v(x,y,t)=\bigl(f(x+tv_1,y+tv_2)-f(x,y)\bigr)/t$, a polynomial since
the numerator vanishes at $t=0$, and
\[
 \Val(A_v)=\{A_v(x,y,t):x,y\in\Fp,\ t\in\Fp^{*}\},
 \qquad \mu(v)=p-|\Val(A_v)| .
\]
For any polynomial $H(x,y,t)$ we use the same notation $\Val(H)$ for its
values on $\Fp^2\times\Fp^{*}$. The number $\mu(v)$ depends only on
$(v_1:v_2)$, and the value set scales, $\Val(A_{cv})=c\Val(A_v)$ for
$c\ne0$, because $A_{cv}(x,y,t)=cA_v(x,y,ct)$; we fix a representative
when writing $\Val(A_v)$.
\end{definition}

\begin{proposition}\label{prop:reduction}
$\NS_S\cap\ell_v=\{m_0\}\cup\{(v_1:v_2:\lambda):\lambda\notin\Val(A_v)\}$,
and therefore
\[
 |\NS_S|=1+\sum_{v\in\PP^1(\Fp)}\mu(v).
\]
\end{proposition}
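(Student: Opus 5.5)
The plan is to prove the description of $\NS_S\cap\ell_v$ by unwinding the definition of a determined direction, and then to obtain the count by partitioning the plane at infinity.

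\emph{Step 1: the direction $m_0$.} By the choice of coordinates, $m_0=(0:0:1)$ is not determined, so $m_0\in\NS_S\cap\ell_v$ for every $v$.

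\emph{Step 2: the directions $(v_1:v_2:\lambda)$.} Fix a representative $v\in\Fp^2\setminus\{0\}$ and $\lambda\in\Fp$. Since $S$ is the graph of $f$, two distinct points of $S$ have the form $P=(x,y,f(x,y))$ and $Q=(x',y',f(x',y'))$ with $(x',y')\ne(x,y)$; vertical pairs do not occur. The line $PQ$ has direction $(v_1:v_2:\lambda)$ precisely when $Q-P$ is a nonzero multiple of $(v_1,v_2,\lambda)$. So we need some $t\in\Fp^*$ with $(x',y')=(x+tv_1,y+tv_2)$ and $f(x+tv_1,y+tv_2)-f(x,y)=t\lambda$, that is, $A_v(x,y,t)=\lambda$. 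Here $t\ne0$ is automatic because $(x',y')\ne(x,y)$ and $v\ne0$. Conversely, any triple $(x,y,t)$ with $t\ne0$ and $A_v(x,y,t)=\lambda$ gives two distinct points of $S$ that determine this direction. Hence $(v_1:v_2:\lambda)$ is determined if and only if $\lambda\in\Val(A_v)$. Changing the representative $v$ to $cv$ turns $(v_1:v_2:\lambda)$ into $(cv_1:cv_2:c\lambda)$ and $\Val(A_v)$ into $c\Val(A_v)$, as noted after Definition~\ref{def:Av}. So the description is consistent and does not depend on the representative. This gives the first formula.

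\emph{Step 3: counting.} Every direction other than $m_0$ has the form $(v_1:v_2:\lambda)$ with $(v_1,v_2)\ne(0,0)$. It lies on the line $\ell_v$ for exactly one $v=(v_1:v_2)\in\PP^1(\Fp)$, and with $v$ fixed it corresponds to a unique $\lambda$. Thus $\pi_\infty\setminus\{m_0\}$ is the disjoint union over $v\in\PP^1(\Fp)$ of the sets $\ell_v\setminus\{m_0\}$, each in bijection with $\Fp$. On each of these sets, Step 2 shows that the non-determined directions number $p-|\Val(A_v)|=\mu(v)$. Adding the single point $m_0$ gives $|\NS_S|=1+\sum_v\mu(v)$.

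Nothing here is a real obstacle. The only points needing care are to exclude $t=0$ (which is why $\Val$ is taken over $\Fp^*$) and to check independence of the representative of $v$, which is done in Step 2.
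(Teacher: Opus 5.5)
Your proposal is correct and follows the paper's proof essentially step by step. You identify a determined direction $(v_1:v_2:\lambda)$ with a pair $P$, $P+tv$ with $t\ne0$ and $A_v(P,t)=\lambda$, and then count by partitioning $\pi_\infty\setminus\{m_0\}$ into the punctured lines $\ell_v\setminus\{m_0\}$; your explicit check that the description does not depend on the representative of $v$ is a harmless addition, since the paper records the scaling $\Val(A_{cv})=c\Val(A_v)$ in Definition~\ref{def:Av}.
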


\begin{proof}
{A pair of distinct points of $S$ whose difference has direction
$(v_1:v_2:\lambda)$, with $(v_1,v_2)\ne(0,0)$, is a pair $(P,f(P))$ and
$\bigl(P+t(v_1,v_2),f(P+t(v_1,v_2))\bigr)$ with $P\in\Fp^2$ and
$t\in\Fp^{*}$, and the two last coordinates differ by $t\lambda$.
Hence the direction $(v_1:v_2:\lambda)$ is determined by $S$ if and only
if $A_v(P,t)=\lambda$ for some such $P$ and $t$, that is, if and only if
$\lambda\in\Val(A_v)$. Together with $m_0\in\NS_S$ this is the
displayed equality.}

{For the formula, observe that the punctured pencils
$\ell_v\setminus\{m_0\}$, $v\in\PP^1(\Fp)$, partition
$\pi_\infty\setminus\{m_0\}$: a point of $\pi_\infty$ other than
$m_0$ is $(v_1:v_2:\lambda)$ with $(v_1,v_2)\ne(0,0)$, and it lies on
$\ell_v$ for exactly one $v=(v_1:v_2)\in\PP^1(\Fp)$. Since the
displayed equality puts $\NS_S\cap(\ell_v\setminus\{m_0\})$ in
bijection with $\Fp\setminus\Val(A_v)$, which has $\mu(v)$ elements,
summing over $v$ gives $|\NS_S|=1+\sum_v\mu(v)$.}
\end{proof}

Thus it suffices to prove $\sum_v\mu(v)<p-1$ for a non-cylindrical
graph. We first separate directions with affine restrictions, and then
bound both the number of the remaining contributing directions and
their individual contributions.

\section{Basic properties of the difference quotients}\label{sec:first-properties}

From now on, unless stated otherwise, $f$ is nonconstant of actual degree
$1\le D<p$. We write $f=f_D+f_{D-1}+\cdots$ and $g:=f_D\ne0$ for the
leading form, $g_x,g_y$ for its partial derivatives,
$\partial_v=v_1\partial_x+v_2\partial_y$ for the derivative in the
direction $v$, and $\widetilde A_v$ for the degree-$(D-1)$ homogeneous
component of $A_v$, which may be zero even when $A_v$ is not.

\begin{lemma}\label{lem:degree}
For $v\ne0$ we have $\deg A_v\le D-1$, and the degree-$(D-1)$ part of
$A_v$ is
\[
 \widetilde A_v(x,y,t)=\frac{g(x+tv_1,y+tv_2)-g(x,y)}{t}.
\]
Moreover:
\emph{(i)} $\widetilde A_v(x,y,0)=v_1g_x+v_2g_y$;
\emph{(ii)} the coefficient of $t^{D-1}$ in $\widetilde A_v$ equals $g(v)$;
\emph{(iii)} if $g=c\prod_{i=1}^{r}\ell_i^{m_i}$ over $\Fpbar$, with $c\ne0$ and pairwise
non-proportional linear forms $\ell_i$ and $c_i:=\ell_i(v)$, then
$t\widetilde A_v=c\bigl(\prod_i(\ell_i+c_it)^{m_i}-\prod_i\ell_i^{m_i}\bigr)$;
\emph{(iv)} $\widetilde A_v\equiv0$ if and only if $g=c\ell^{D}$ for a
linear form $\ell$ with $\ell(v)=0$.
\end{lemma}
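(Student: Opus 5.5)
We prove this by splitting $f=g+f_{<D}$ into homogeneous components. For each $j$, the numerator $f_j(x+tv_1,y+tv_2)-f_j(x,y)$ is homogeneous of degree $j$ in $(x,y,t)$ and vanishes at $t=0$, so dividing by $t$ leaves a homogeneous polynomial of degree $j-1$ (or zero). Hence $\deg A_v\le D-1$. The only contribution in degree $D-1$ comes from $j=D$, which gives the displayed formula for $\widetilde A_v$.

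For (i), set $t=0$ in the quotient. Expanding by Taylor, $g(x+tv_1,y+tv_2)=g+t(v_1g_x+v_2g_y)+O(t^2)$, and this is a polynomial identity, so it is valid in any characteristic. For (ii), the $t^D$ coefficient of $g(x+tv_1,y+tv_2)$ is $g(v_1,v_2)$, obtained by taking the top-degree part in $t$ of each monomial. The subtracted term $g(x,y)$ contains no $t$, so after dividing by $t$ the coefficient of $t^{D-1}$ is $g(v)$. For (iii), substitute the factorization: each $\ell_i$ is linear, so $\ell_i(x+tv_1,y+tv_2)=\ell_i(x,y)+t\,\ell_i(v)=\ell_i+c_it$, and multiplying out gives the identity directly.

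The only part with real content is (iv). The \emph{if} direction is clear: if $g=c\ell^D$ with $\ell(v)=0$, then $c_1=0$ and the bracket in (iii) vanishes. For the \emph{only if} direction, suppose $\widetilde A_v\equiv0$. Then $g(x+tv_1,y+tv_2)=g(x,y)$ as polynomials, so by (iii) we have $\prod_i(\ell_i+c_it)^{m_i}=\prod_i\ell_i^{m_i}$ in $\Fpbar[x,y,t]$.

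I would compare factorizations in the UFD $\Fpbar[x,y,t]$. Each $\ell_i+c_it$ is an irreducible linear form. If some $c_i\ne0$, that factor involves $t$, yet it must divide the right-hand side, which is free of $t$ and whose irreducible factors are the $\ell_j$. This is impossible, so $c_i=0$ for all $i$. That means every $\ell_i$ vanishes at $v$. Since $v\ne0$, the linear forms on the plane vanishing at $v$ form a one-dimensional space, so all the $\ell_i$ are proportional. Pairwise non-proportionality then forces $r=1$, so $g=c\ell^D$ with $\ell(v)=0$.

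A shorter alternative for this step is to set $t=1$ in (ii): $\widetilde A_v\equiv0$ gives $g(v)=0$, and invariance of $g$ under translation by $v$ gives $\partial_vg=0$. However, in characteristic $p$ the condition $\partial_v g=0$ alone does not force the $\ell^D$ form; for instance $x^p$ would be a counterexample, and such examples are excluded only because $D<p$. The factorization argument avoids this characteristic issue entirely, so it is the one I would present.
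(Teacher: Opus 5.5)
Your proof is correct, and for (i)--(iii) it is essentially the paper's argument. It differs from the paper in two places.

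\emph{The degree bound.} The paper expands $f(x+u,y+w)-f(x,y)$ with Hasse derivatives and obtains $A_v=\sum_{k\ge1}t^{k-1}\sum_{i+j=k}f^{(i,j)}v_1^iv_2^j$. You instead note that each $\bigl(f_j(P+tv)-f_j(P)\bigr)/t$ is homogeneous of degree $j-1$. Your version is cleaner for the statement itself. The paper's expansion, however, also identifies the coefficient of $t^{k-1}$ in $A_v$ as the $k$-th Hasse derivative of $f$ along $v$. It reuses this in Section~\ref{sec:pivot} to get $\deg_tA_v=r(v)-1$.

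\emph{The ``only if'' in (iv).} The paper takes exactly the shortcut you set aside. In coordinates with $v=\partial_s$, the hypothesis $\widetilde A_v\equiv0$ gives $\partial_sg=0$ by (i). The standing hypothesis $D<p$ then forces $g\in\Fp[u]$, and homogeneity gives $g=cu^D$. Your comparison of factorizations in $\Fpbar[x,y,t]$ uses the full translation invariance $g(P+tv)=g(P)$, with $t$ an indeterminate. This is strictly stronger than $\partial_vg=0$, so your argument needs no relation between $D$ and $p$. The paper's argument is shorter but relies on $D<p$, which is available throughout the paper.

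A minor point: your $\ell$ is a priori defined over $\Fpbar$. It is proportional to $v_2x-v_1y$, however, so it can be taken over $\Fp$, matching the paper's $u$.
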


\begin{proof}
By the Taylor expansion with Hasse derivatives,
\[
 f(x+u,y+w)-f(x,y)=\sum_{(i,j)\ne(0,0)}f^{(i,j)}(x,y)u^iw^j,
\]
with $\deg f^{(i,j)}\le D-i-j$. Putting $u=tv_1$, $w=tv_2$ and dividing
by $t$ gives
\[
 A_v=\sum_{k\ge1}t^{k-1}\sum_{i+j=k}f^{(i,j)}(x,y)v_1^iv_2^j .
\]
Each summand has degree at most $(k-1)+(D-k)=D-1$; the part of degree
exactly $D-1$ comes from $f_D$. Then (i) is $k=1$, (ii) follows from
$\widetilde A_v(0,0,t)=t^{-1}g(tv)=t^{D-1}g(v)$, and (iii) is immediate.
Finally, choose coordinates $(u,s)$ with $v=\partial_s$. If
$\widetilde A_v\equiv0$, then $\partial_sg=0$. Since $D<p$, this
implies $g\in\Fp[u]$, and homogeneity gives $g=cu^D$. The converse
follows directly from the displayed formula, proving (iv).
\end{proof}

\begin{lemma}\label{lem:cylinder}
$S$ is a cylinder if and only if $A_v$ is constant for some $v$; the
direction of the cylinder is then $(v_1:v_2:A_v)$, and it is never equal
to $m_0$.
\end{lemma}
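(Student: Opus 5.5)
Both implications come from unwinding what "$A_v$ constant" means for the function $f$, working with $f$ as a function on $\Fp^2$ (equivalently its reduced representative), not with the formal polynomial.

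\emph{Constant $A_v$ gives a cylinder.} Suppose $A_v\equiv\lambda$ as a function on $\Fp^2\times\Fp^{*}$. Then $f(P+tv)=f(P)+t\lambda$ for all $P\in\Fp^2$ and $t\in\Fp$; for $t=0$ this is trivial. Hence for each $P$ the $p$ points $(P+tv,\,f(P)+t\lambda)$ of $S$ form the full affine line through $(P,f(P))$ with direction $(v_1:v_2:\lambda)$. The lines through different $P$ are either equal or disjoint. Each line contains $p$ points of $S$ and $|S|=p^2$, so $S$ is the union of exactly $p$ parallel lines, i.e.\ a cylinder with direction $(v_1:v_2:\lambda)\ne m_0$.

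\emph{A cylinder gives a constant $A_v$.} Let $S$ be a union of $p$ parallel lines with common direction $d$. Since $m_0$ is not determined, $d\ne m_0$, because a vertical line would put two points of $S$ on a line of direction $m_0$. Write $d=(v_1:v_2:\lambda)$ with $v=(v_1,v_2)\ne0$. Every point $(P,f(P))$ of $S$ lies on one of the lines, and that line is $\{(P+tv,\,f(P)+t\lambda)\}$. Since $S$ is a graph, these points must be $(P+tv,f(P+tv))$. So $f(P+tv)=f(P)+t\lambda$ for all $P$ and $t$, which gives $A_v\equiv\lambda$ on $\Fp^2\times\Fp^{*}$. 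The same computation, read in reverse, shows that the direction of the cylinder is $(v_1:v_2:A_v)$.

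\emph{Function versus polynomial.} If "constant" is meant as a polynomial identity, one more step is needed. $f(x+tv_1,y+tv_2)-f(x,y)-\lambda t$ has degree less than $p$ in each variable after a linear change of coordinates adapted to $v$. If it vanishes on all of $\Fp^3$, it is the zero polynomial, since $D<p$. So functional and polynomial constancy agree, and the main point to watch is applying this reduction argument correctly. The case $t=0$ excluded from $\Val$ causes no trouble, because the identity holds trivially there.
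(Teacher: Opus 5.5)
Your proof is correct and follows the paper's argument. The forward direction unwinds $f(P+tv)=f(P)+\lambda t$. The converse notes that the cylinder's direction is determined, hence different from $m_0$, and uses the graph property; the functional identity is then upgraded to a polynomial one. The only difference is cosmetic: you clear the denominator and use that $f(x+tv_1,y+tv_2)-f(x,y)-\lambda t$ has degree at most $D<p$ in each variable and vanishes on all of $\Fp^3$ (no change of coordinates is needed for this), whereas the paper keeps $A_v-c$ and uses $\deg_t\le p-2$ against the $p-1$ points of $\Fp^{*}$.
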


\begin{proof}
Note that $A_v\equiv c$ means $f(P+tv)=f(P)+ct$ for all $P,t$, i.e.\ $S$
is the union of the $p$ lines with direction $(v_1:v_2:c)$.

Conversely, assume that $S$ is a cylinder, say the union of $p$ parallel
lines with direction $w$. Each of these lines is contained in $S$ and
carries $p\ge2$ points of $S$, so two distinct points of $S$ realise $w$;
that is, $w$ is determined by $S$. Since $m_0\in\NS_S$ is not determined,
$w\ne m_0$, and therefore $w=(v_1:v_2:c)$ with $v:=(v_1,v_2)\ne0$; we fix
this representative of $w$, which also fixes $c$. Let $P\in\Fp^2$. The
point $(P,f(P))$ lies on one of those $p$ lines, and that line is
contained in $S$, so $(P,f(P))+t(v_1,v_2,c)\in S$ for every $t\in\Fp$. As
$S$ is the graph of $f$, this says $f(P+tv)=f(P)+tc$, i.e.
\[
 A_v(P,t)=c\qquad\text{for all }P\in\Fp^2\text{ and all }t\in\Fp^{*} .
\]
It remains to extend this identity from $\Fp^2\times\Fp^{*}$ to $\Fp^3$
and then to an identity of polynomials. By Lemma~\ref{lem:degree} we have
$\deg(A_v-c)\le D-1$, so each of the degrees of $A_v-c$ in $x$, in $y$
and in $t$ is at most $D-1\le p-2$. Fix $a,b\in\Fp$: the univariate
polynomial $(A_v-c)(a,b,t)$ has degree at most $p-2$ and vanishes at the
$p-1$ elements of $\Fp^{*}$, hence it is the zero polynomial. Writing
$A_v-c=\sum_k h_k(x,y)t^k$, this means that every $h_k$ vanishes at every
point of $\Fp^2$; since $\deg_xh_k\le p-2$ and $\deg_yh_k\le p-2$, each
$h_k$ is the zero polynomial. Therefore $A_v-c=0$, so $A_v$ is the
constant $c$ and the cylinder has direction $(v_1:v_2:A_v)$.
\end{proof}

The following lemma applies to both $f$ and its leading form $g$.

\begin{lemma}\label{lem:unique-degenerate}
Let $F\in\Fp[x,y]$ with $\deg F<p$, and let $v,v'$ be non-proportional
directions with $\partial_v^2F\equiv\partial_{v'}^2F\equiv0$. Then
$\deg F\le2$. In particular, if $\deg F\ge3$ there is at most one
$v\in\PP^1(\Fp)$ with $\partial_v^2F\equiv0$.
\end{lemma}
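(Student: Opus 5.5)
The plan is to change coordinates so that $v$ and $v'$ become the coordinate directions. Since $v,v'$ are non-proportional elements of $\Fp^2$, there is an invertible linear map of $\Fp^2$ sending $v$ to $(1,0)$ and $v'$ to $(0,1)$. Precomposing $F$ with it gives a polynomial $G$ with the same total degree, and with $\partial_v,\partial_{v'}$ becoming $\partial_x,\partial_y$. So it suffices to treat the case where $\partial_x^2F\equiv0$ and $\partial_y^2F\equiv0$.

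Write $F=\sum_{i,j}a_{ij}x^iy^j$. Then
\[
\partial_x^2F=\sum_{i,j}i(i-1)a_{ij}x^{i-2}y^j .
\]
Every monomial with $a_{ij}\ne0$ has $i\le\deg F<p$, so $i(i-1)\not\equiv0\pmod p$ unless $i\in\{0,1\}$. Hence $\partial_x^2F\equiv0$ forces $a_{ij}=0$ whenever $i\ge2$. The same argument with $\partial_y^2$ forces $a_{ij}=0$ whenever $j\ge2$. Therefore $F$ lies in the span of $1,x,y,xy$, and so $\deg F\le2$.

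For the ``in particular'' statement, suppose $\deg F\ge3$ and that two distinct points $v,v'\in\PP^1(\Fp)$ both satisfy $\partial^2F\equiv0$. Distinct points of $\PP^1(\Fp)$ have non-proportional representatives, and the condition $\partial_v^2F\equiv0$ depends only on the class of $v$, since $\partial_{cv}^2=c^2\partial_v^2$. The first part then gives $\deg F\le2$, a contradiction.

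The argument has no real obstacle. The only point needing care is the hypothesis $\deg F<p$, which is what makes $i(i-1)$ invertible in $\Fp$ for $2\le i\le\deg F$. Without it, for example $F=x^p$ would have vanishing second derivatives in every direction.
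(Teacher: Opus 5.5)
Your proof is correct and follows the paper's argument: move $v,v'$ to the coordinate directions, note that $F_{xx}=F_{yy}=0$ together with $\deg F<p$ rules out every monomial of degree at least two in either variable, and conclude $\deg F\le2$; you also spell out the ``in particular'' part, which the paper leaves implicit. One small slip: to turn $\partial_v,\partial_{v'}$ into $\partial_x,\partial_y$ you should precompose $F$ with the linear map sending $(1,0)\mapsto v$ and $(0,1)\mapsto v'$, which is the inverse of the map you named.
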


\begin{proof}
After a linear change of coordinates the two directions are $(1,0)$ and
$(0,1)$, so $F_{xx}=F_{yy}=0$. Since $\deg F<p$, every monomial of $F$
has degree at most one in each variable, whence $\deg F\le2$.
\end{proof}

\section{The permutation criterion}\label{sec:pivot}

For $v\ne0$ let $\ell$ run over the $p$ affine $\Fp$-lines of $\AAA^2$ with direction
$v$, and for each of them fix a base point $P_\ell$, so that
$s\mapsto P_\ell+sv$ parametrises $\ell$. Set
\[
 \psi_{\ell,\lambda}(s):=f(P_\ell+sv)-\lambda s\in\Fp[s].
\]
Moving the base point to $P_\ell+s_0v$ replaces $\psi_{\ell,\lambda}(s)$
by $\psi_{\ell,\lambda}(s+s_0)+\lambda s_0$, so neither its degree nor
the property of permuting $\Fp$ depends on that choice.

It is convenient to use coordinates adapted to $v$. Fix $w\in\Fp^2$ such
that $(w,v)$ is a basis, and write each point of $\AAA^2$ uniquely as
$P=uw+sv$ with $u,s\in\Fp$. For $v=(0,1)$ one may take $w=(1,0)$, so that
$(u,s)=(x,y)$. Adding a multiple of $v$ changes only $s$, so the $p$
lines of direction $v$ are the levels $\{u=u_0\}$, $u_0\in\Fp$: the
coordinate $u$ labels them, and the derivative along $v$ is
$\partial_v=\partial_s$. As a function of $P$, $u$ is the linear form
with $u(v)=0$ and $u(w)=1$. Choosing $P_\ell=u_0w$, the point of $\ell$
with $s=0$, the point $P_\ell+sv$ has coordinates $(u_0,s)$, so the
parameter in $\psi_{\ell,\lambda}$ is the coordinate $s$ itself. Finally,
the change of coordinates is linear and invertible, so $f$ still has
total degree $D$ in $(u,s)$; writing $f=\sum_kc_k(u)s^{k}$, every
monomial $u^as^{k}$ occurring in $f$ satisfies $a+k\le D$, whence
$\deg c_k\le D-k$.

Whenever $\partial_vf\ne0$, put $r(v):=\max\{k\ge1:\partial_v^kf\ne0\}$.
Note that $\partial_vf=0$ yields $A_v=0$, and so $S$ is a cylinder by
Lemma~\ref{lem:cylinder}. In the adapted coordinates $r(v)=\deg_sf$,
that is, $r(v)$ is the degree of the generic restriction of $f$ to a line
of direction $v$. Indeed
$\partial_s^{k}f=\sum_{j\ge k}j(j-1)\cdots(j-k+1)\,c_j(u)s^{j-k}$, and
each falling factorial occurring here is a product of integers at most
$D<p$, hence nonzero in $\Fp$; so $\partial_s^kf\ne0$ precisely for
$k\le\deg_sf$. In particular $1\le r(v)\le D$, and $r(v)=D$ if and only
if $g(v)\ne0$, because the coefficient of $s^{D}$ in $f$ is the
coefficient of $s^{D}$ in $g$, namely $g(v)$. Then $\deg_tA_v=r(v)-1$:
by the expansion in the proof of Lemma~\ref{lem:degree} the coefficient
of $t^{k-1}$ in $A_v$ is the $k$-th Hasse derivative of $f$ in the
direction $v$, which equals $\partial_v^kf/k!$ because $k\le D<p$.
Moreover $\partial_v^{r(v)}f$ is constant on the lines of direction $v$,
being equal to $r(v)!\,c_{r(v)}(u)$, a nonzero polynomial in $u$ alone of
degree at most $D-r(v)$; so if $r(v)\ge2$, then
$\deg\psi_{\ell,\lambda}=r(v)$ for all but at most $D-r(v)$ lines: the
coefficient of $s^{r(v)}$ in $\psi_{\ell,\lambda}$ is
$c_{r(v)}(u_\ell)$, untouched by the correction $-\lambda s$ as soon as
$r(v)\ge2$, and it vanishes only at the at most $D-r(v)$ roots of
$c_{r(v)}$. For $r(v)=1$, subtracting $\lambda s$ can cancel the linear
term, so this degree assertion is not made: there $f=c_1(u)s+c_0(u)$ and
$\psi_{\ell,\lambda}=(c_1(u_\ell)-\lambda)s+c_0(u_\ell)$, which drops to
a constant on every line with $c_1(u_\ell)=\lambda$.

\begin{proposition}\label{prop:perm}
For $v\ne0$ and $\lambda\in\Fp$ one has $\lambda\notin\Val(A_v)$ if and
only if $\psi_{\ell,\lambda}$ permutes $\Fp$ for \emph{every} line $\ell$
of direction $v$.
\end{proposition}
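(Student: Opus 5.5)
The proof is a direct unwinding of the definitions in the adapted coordinates $(u,s)$ of Section~\ref{sec:pivot}. Fix $v$ and $\lambda$, and parametrise each line $\ell$ of direction $v$ by $s\mapsto P_\ell+sv$ with $P_\ell=u_\ell w$. Since every point $P\in\Fp^2$ can be written uniquely as $P_\ell+s_1v$ for some line $\ell$ and some $s_1\in\Fp$, we get a bijection
\[
(P,t)\ \longleftrightarrow\ (\ell,s_1,s_2),\qquad s_2=s_1+t,
\]
between $\Fp^2\times\Fp^{*}$ and the triples consisting of a line $\ell$ and two distinct parameters $s_1\ne s_2$. Under this correspondence
\[
A_v(P,t)-\lambda=\frac{f(P_\ell+s_2v)-f(P_\ell+s_1v)}{s_2-s_1}-\lambda
=\frac{\psi_{\ell,\lambda}(s_2)-\psi_{\ell,\lambda}(s_1)}{s_2-s_1}.
\]
The second equality holds because the correction term $-\lambda s$ contributes exactly $-\lambda(s_2-s_1)$ to the numerator.

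Both implications follow at once from this identity. The value $\lambda$ lies in $\Val(A_v)$ if and only if there exist a line $\ell$ and parameters $s_1\ne s_2$ with $\psi_{\ell,\lambda}(s_1)=\psi_{\ell,\lambda}(s_2)$, that is, if and only if some $\psi_{\ell,\lambda}$ fails to be injective on $\Fp$. Since $\Fp$ is finite, injectivity of $\psi_{\ell,\lambda}$ is equivalent to its permuting $\Fp$. Negating both sides gives the statement.

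The only points needing care are the following. The value set $\Val(A_v)$ is taken over $t\in\Fp^{*}$, which matches exactly the condition $s_1\ne s_2$. The choice of the representative $v$ and of the base points $P_\ell$ does not matter: the remarks preceding the proposition show that changing the base point preserves the permutation property, and changing $v$ to $cv$ changes $s$ by a scalar. No genuine obstacle arises.
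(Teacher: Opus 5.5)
Your proof is correct and is essentially the paper's argument: both identify pairs $(P,t)\in\Fp^2\times\Fp^{*}$ with pairs of distinct parameters $s_1\ne s_2$ on a single line of direction $v$, and observe that $A_v(P,t)=\lambda$ exactly when $\psi_{\ell,\lambda}(s_1)=\psi_{\ell,\lambda}(s_2)$. The only difference is cosmetic: you compute directly from the definition of $A_v$, whereas the paper phrases the same step through pairs of points of $S$ determining the direction $(v_1:v_2:\lambda)$, that is, via Proposition~\ref{prop:reduction}.
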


\begin{proof}
A pair of distinct points of $S$ realising the direction
$(v_1:v_2:\lambda)$ is a pair $P_\ell+s_1v$, $P_\ell+s_2v$ on one line
$\ell$ with $s_1\ne s_2$ and
$f(P_\ell+s_1v)-f(P_\ell+s_2v)=\lambda(s_1-s_2)$, i.e.\
$\psi_{\ell,\lambda}(s_1)=\psi_{\ell,\lambda}(s_2)$. So $\lambda$ is
attained if and only if some $\psi_{\ell,\lambda}$ fails to be injective
and thus bijective. Every point of $\AAA^2$ lies on exactly one line of
direction $v$, so the $p$ polynomials $\psi_{\ell,\lambda}$ do account
for all such pairs.
\end{proof}

The criterion imposes a permutation condition on every line of the
given direction. We distinguish the possible degrees of these restrictions.

\begin{theorem}\label{thm:trichotomy}
Let $f$ have actual degree $3\le D<p$ and suppose that its graph $S$ is
not a cylinder. Let $v\in\PP^1(\Fp)$ with $\mu(v)>0$ and fix
$\lambda\notin\Val(A_v)$. Then $r(v)\ne2$, and exactly one of the
following holds.
\begin{enumerate}
\item[\textup{(L)}] $r(v)=1$, equivalently $\partial_v^2f\equiv0$
{(the equivalence uses $\partial_vf\ne0$, which holds because $S$ is
not a cylinder)}. Then
$f=c_1(u)s+c_0(u)$ and $A_v=c_1(u)$ involves neither $s$ nor $t$. At
most one direction is of this kind.
\item[\textup{(G)}] $3\le r(v)<D$, equivalently $g(v)=0$ and
$\partial_v^2f\ne0$.
\item[\textup{(A)}] $r(v)=D$, equivalently $g(v)\ne0$; then
$\psi_{\ell,\lambda}$ is a permutation polynomial of $\Fp$ of degree $D$
for \emph{every} line $\ell$ of direction $v$.
\end{enumerate}
Directions of kind \textup{(L)} or \textup{(G)} are roots of the binary
form $g$, hence at most $D$ in number. Writing $\BB_{\rm ar}(f)$ for the
set of directions of kind \textup{(A)} 
and $\BB(f)=\{v:\mu(v)>0\}$,
\[
 |\BB(f)|\le D+|\BB_{\rm ar}(f)| .
\]
\end{theorem}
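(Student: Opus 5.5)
The plan is to work throughout in the coordinates $(u,s)$ adapted to $v$ from Section~\ref{sec:pivot}, writing $f=\sum_k c_k(u)s^k$ with $\deg c_k\le D-k$. We use three facts established there: $1\le r(v)=\deg_s f\le D$ (recall $\partial_vf\ne0$ since $S$ is not a cylinder); $r(v)=D$ if and only if $g(v)\ne0$; and for $r(v)\ge2$ the restriction $\psi_{\ell,\lambda}$ has degree exactly $r(v)$ on all but at most $D-r(v)$ lines. Since $1\le r(v)\le D$, the three cases $r(v)=1$, $3\le r(v)<D$ and $r(v)=D$ are mutually exclusive and exhaust everything once $r(v)=2$ is ruled out. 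So the first step is to exclude $r(v)=2$.

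To exclude $r(v)=2$, note that in this case $D\ge3>r(v)$, so $g(v)=0$ and $c_2(u)$ is a nonzero polynomial of degree at most $D-2<p$. Choose a line $u=u_0$ with $c_2(u_0)\ne0$; such a line exists because $c_2$ has fewer than $p$ roots. On this line $\psi_{\ell,\lambda}(s)=c_2(u_0)s^2+(c_1(u_0)-\lambda)s+c_0(u_0)$ is a quadratic polynomial over $\Fp$ with $p$ odd. Completing the square shows it takes exactly $(p+1)/2$ values, so it is not a permutation. By Proposition~\ref{prop:perm}, $\lambda\in\Val(A_v)$, which contradicts the choice of $\lambda$.

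Next we treat each case in turn.

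\textbf{Case (L).} We have $r(v)=1$ exactly when $\partial_s^2f=0$ and $\partial_sf\ne0$. Then $f=c_1(u)s+c_0(u)$, and substituting into the definition gives $A_v=(f(u,s+t)-f(u,s))/t=c_1(u)$. Lemma~\ref{lem:unique-degenerate} applied to $F=f$ with $\deg f=D\ge3$ shows there is at most one such $v$. Moreover $r(v)=1<D$ forces $g(v)=0$.

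\textbf{Case (G).} The condition $3\le r(v)<D$ is equivalent to $r(v)\ne D$ together with $r(v)\ge2$ (after the exclusion above), that is, to $g(v)=0$ and $\partial_v^2f\ne0$.

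\textbf{Case (A).} This is $r(v)=D$, equivalently $g(v)\ne0$. Here the leading coefficient $c_D(u)$ has degree $\le0$, so it is the nonzero constant $g(v)$. Hence every $\psi_{\ell,\lambda}$ has degree exactly $D$, because $D\ge3>1$ means the term $-\lambda s$ cannot affect the leading coefficient. By Proposition~\ref{prop:perm}, each $\psi_{\ell,\lambda}$ is a permutation polynomial.

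For the counting step, every $v$ in $\BB(f)$ is of exactly one kind. Those of kind (L) or (G) satisfy $g(v)=0$. The form $g$ is a nonzero binary form of degree $D$, so it has at most $D$ projective roots in $\PP^1(\Fp)$. This gives $|\BB(f)|\le D+|\BB_{\rm ar}(f)|$. I expect no serious obstacle here; the only delicate point is the $r(v)=2$ exclusion, specifically ensuring a line with nonvanishing quadratic coefficient exists (using $\deg c_2<p$) and using that $p$ is odd.
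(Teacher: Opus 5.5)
Your proposal is correct and follows essentially the same route as the paper's proof. You rule out $r(v)=2$ by finding a line on which $\psi_{\ell,\lambda}$ is a genuine quadratic, which cannot permute $\Fp$ for odd $p$. You then separate the remaining cases by $r(v)$, using $r(v)=D\iff g(v)\ne0$ and Lemma~\ref{lem:unique-degenerate} for the uniqueness of the (L) direction, and you count the (L) and (G) directions as roots of $g$.
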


\begin{proof}
Since $\mu(v)>0$ there is an omitted value $\lambda$, and
Proposition~\ref{prop:perm} makes every $\psi_{\ell,\lambda}$ a
permutation of $\Fp$. As $S$ is not a cylinder, $\partial_vf\ne0$ by
Lemma~\ref{lem:cylinder}, so $r(v)$ is defined and $1\le r(v)\le D$.

If $r(v)=2$, then by the degree count above $\psi_{\ell,\lambda}$ has
degree two on all but at most $D-2$ lines; since $p$ is odd, no such
polynomial permutes $\Fp$, a contradiction. Hence $r(v)\ne2$, and the
three cases are distinguished by the value of $r(v)$, with $r(v)=D$ if
and only if $g(v)\ne0$.

In case \textup{(L)} one has $\deg_sf=1$, that is $f=c_1(u)s+c_0(u)$,
whence $A_v(u,s,t)=\bigl(c_1(u)(s+t)-c_1(u)s\bigr)/t=c_1(u)$; moreover
$r(v)=1$ says $\partial_vf\ne0=\partial_v^{2}f$, and
Lemma~\ref{lem:unique-degenerate} applied to $F=f$ leaves at most one
such direction. In case \textup{(A)} we have $\deg c_D\le D-D=0$, so the
leading coefficient $c_D$ is the nonzero constant $g(v)$. For
$2\le r(v)<D$ the coefficient $c_{r(v)}(u)$ may vanish, and the degree
of the restriction then drops below $r(v)$ on the lines indexed by its
at most $D-r(v)$ roots; for $r(v)=D$ no such line exists, a nonzero
constant having no root. Hence $\deg\psi_{\ell,\lambda}=D$ on every
line of direction $v$. Finally
$r(v)<D$ means $g(v)=0$, and the binary form $g$ of degree $D$ has at
most $D$ roots in $\PP^1(\Fp)$.
\end{proof}

\begin{proposition}\label{prop:affine-restrictions}
Let $S$ be a non-cylindrical graph of degree $D<p$. Suppose that
$r(v)=1$ for some direction $v$. In coordinates adapted to $v$, write
$f(u,s)=c_1(u)s+c_0(u)$ and put $e=\deg c_1$. Then $1\le e\le D-1$,
every $A_w$ with $w\ne v$ is surjective, and
\[
 |\NS_S|=1+p-|c_1(\Fp)|
 \le p+1-\left\lceil\frac pe\right\rceil
 \le p+1-\left\lceil\frac p{D-1}\right\rceil<p.
\]
All non-determined directions lie on $\ell_v$.
\end{proposition}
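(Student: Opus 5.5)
In adapted coordinates $f(u,s)=c_1(u)s+c_0(u)$ with $v=\partial_s$. I would begin with the degree bounds on $e=\deg c_1$. If $c_1$ were constant, then $A_v=c_1$ would be constant by case (L) of Theorem~\ref{thm:trichotomy}, and $S$ would be a cylinder by Lemma~\ref{lem:cylinder}; hence $e\ge1$. Since $\deg c_k\le D-k$, we get $e\le D-1$.

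Next comes the contribution of $v$ itself. By Theorem~\ref{thm:trichotomy}(L), $A_v=c_1(u)$ does not involve $s$ or $t$, so $\Val(A_v)=c_1(\Fp)$ and $\mu(v)=p-|c_1(\Fp)|$. Every fibre of $c_1$ on $\Fp$ has at most $e$ elements, because $c_1-a$ is a nonzero polynomial of degree $e<p$. Therefore $|c_1(\Fp)|\ge\lceil p/e\rceil\ge\lceil p/(D-1)\rceil\ge2$ when $D-1<p$, and this already gives $p+1-\lceil p/(D-1)\rceil<p$. By Proposition~\ref{prop:reduction}, the displayed formula and the final assertion then both follow once $\mu(w)=0$ is shown for every $w\ne v$.

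The main step, and the one I expect to be the real obstacle, is the surjectivity of $A_w$ for $w\ne v$. Write $w=\alpha w_0+\beta v$ with $\alpha\ne0$ in the basis $(w_0,v)$; after scaling, $w$ moves $u$ by $t$ and $s$ by $\beta t$. Then
\[
A_w=\frac{c_1(u+t)(s+\beta t)-c_1(u)s+c_0(u+t)-c_0(u)}{t}
= s\,\frac{c_1(u+t)-c_1(u)}{t}+\beta c_1(u+t)+\frac{c_0(u+t)-c_0(u)}{t}.
\]
For fixed $u$ and $t\ne0$ this expression is affine in $s$ with slope $(c_1(u+t)-c_1(u))/t$. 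Since $e\ge1$, $c_1$ is nonconstant, so it is not invariant under every translation. Indeed, $c_1(u+t)=c_1(u)$ for all $u$ with a fixed $t\ne0$ would make $c_1$ periodic, hence constant as $e<p$. So for each $t\ne0$ some $u_0$ satisfies $c_1(u_0+t)\ne c_1(u_0)$. With $u=u_0$ and that $t$, letting $s$ range over $\Fp$ produces every value in $\Fp$, so $\Val(A_w)=\Fp$ and $\mu(w)=0$.

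It remains to handle the case where $w$ is proportional to $w_0$ rather than to $v$, which is the case $\beta=0$ above. The same computation covers it, so every direction $w\ne v$ has $\mu(w)=0$ and all non-determined directions lie on $\ell_v$.
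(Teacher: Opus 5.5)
Your proposal is correct and follows the paper's proof essentially step for step: $e\ge1$ via Lemma~\ref{lem:cylinder}, the fibre count giving $|c_1(\Fp)|\ge\lceil p/e\rceil$, and surjectivity of $A_w$ because it is affine in $s$ with slope $(c_1(u+t)-c_1(u))/t$, which is nonzero for a suitable choice. The only differences are cosmetic. You fix $t\ne0$ and find $u_0$, while the paper picks $u,u'$ with $c_1(u)\ne c_1(u')$ and solves for $t$. The identity $A_v=c_1(u)$ is better obtained by the direct one-line computation than by citing case (L) of Theorem~\ref{thm:trichotomy}, whose hypotheses $D\ge3$ and $\mu(v)>0$ are not assumed here. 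Your closing paragraph on $\beta=0$ is redundant, since the general computation already covers that case.
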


\begin{proof}
We have $A_v=c_1(u)$ and $\deg c_1\le D-1$. If $c_1$ were constant,
Lemma~\ref{lem:cylinder} would make $S$ a cylinder, so $e\ge1$.
For a different direction $w=(a,b)$ in the coordinates $(u,s)$,
we have $a\ne0$, and the coefficient of $s$ in $A_w$ is
\[
 \frac{c_1(u+ta)-c_1(u)}t.
\]
Since $c_1$ is nonconstant and $e<p$, it is not constant as a function
on $\Fp$. Choose $u,u'\in\Fp$ with $c_1(u')\ne c_1(u)$ and set
$t=(u'-u)/a\ne0$. For these fixed $u,t$, the polynomial $A_w$ is
affine in $s$ with nonzero coefficient, hence assumes every value of
$\Fp$. Thus $\mu(w)=0$ for all $w\ne v$.
Proposition~\ref{prop:reduction} gives the equality and the assertion
about $\ell_v$. Each fibre of $c_1$ has at most $e$ points, so
$|c_1(\Fp)|\ge\lceil p/e\rceil$. Finally $e\le D-1<p$ implies
$\lceil p/e\rceil\ge2$, proving the strict inequality.
\end{proof}

\section{Exceptional restrictions and the number of directions}\label{sec:count}

By Theorem~\ref{thm:trichotomy}, it remains to bound
$|\BB_{\rm ar}(f)|$. For these directions every corrected line
restriction is a permutation polynomial of degree $D$. We use the
following two classical results.

\begin{itemize}
\item[\textup{(W)}] (Wan \cite{Wan}, with the explicit bound due to von
zur Gathen \cite{vZG}; see also Guralnick--Wan \cite{GW}, and the
improved bounds in \cite{CG,Fan}) for $p>D^4$ every permutation
polynomial $\psi$ of $\Fp$ of degree $n\le D$ is \emph{exceptional},
meaning that
$\Phi(s_1,s_2):=\bigl(\psi(s_1)-\psi(s_2)\bigr)/(s_1-s_2)$ has no
absolutely irreducible factor defined over $\Fp$. The case $n=1$ is immediate. For $n\ge2$, if
such a factor $F$ existed, of degree $d\le n-1$, the Weil bound for
absolutely irreducible plane curves \cite{AP} would give at least
$p+1-(d-1)(d-2)\sqrt p-2d$ affine $\Fp$-points on $F$, of
which at most $d$ lie on the diagonal $s_1=s_2$ (which is not a
component, since $\Phi(s,s)=\psi'(s)\not\equiv0$ as $n<p$); for
$p\ge n^4$ one has $\sqrt p\ge n^2>(n-2)(n-3)+3n/\sqrt p$, so a point
with $s_1\ne s_2$ and $\psi(s_1)=\psi(s_2)$ exists, contradicting
injectivity;
\item[\textup{(C)}] (Fried--Guralnick--Saxl \cite{FGS}, M\"uller
\cite[Appendix]{Muller}; see also \cite{LMT}) an exceptional polynomial
over $\Fp$ of degree $n$ with $\gcd(n,p)=1$ is a composition
$\alpha_0\circ\chi_1\circ\alpha_1\circ\cdots\circ\chi_k\circ\alpha_k$,
where the $\alpha_i$ are affine polynomials of degree one over $\Fp$ and
each $\chi_i$ is $x^{n_i}$ or a Dickson polynomial
$\Dick_{n_i}(x,\delta_i)$ with $n_i$ prime. The constituents are
exceptional. For $r$ prime to $p$, $x^r$ is exceptional exactly when
$\gcd(r,p-1)=1$; for $\delta\ne0$, $\Dick_r(x,\delta)$ is exceptional
exactly when $\gcd(r,p^2-1)=1$.
\end{itemize}

Our convention for Dickson polynomials is $\Dick_0(z,\delta)=2$,
$\Dick_1(z,\delta)=z$ and
$\Dick_r(z,\delta)=z\Dick_{r-1}(z,\delta)-\delta\Dick_{r-2}(z,\delta)$.
In particular $\Dick_r(z,0)=z^r$ for $r\ge1$.

A \emph{type} of degree $n$ is an ordered list
$\tau=(r_1,\ldots,r_k)$ of odd primes with product $n$.
For $r_i=3$ put $\chi_i(z)=z^3$; for $r_i\ge5$ put
$\chi_i(z)=\Dick_{r_i}(z,\delta_i)$, allowing $\delta_i=0$.
A polynomial $R\in F[z]$ \emph{has type $\tau$ over $F$} if
\[
 R=\alpha_0\circ\chi_1\circ\alpha_1\circ\cdots\circ\chi_k\circ\alpha_k
\]
for affine polynomials $\alpha_i\in F[z]$ of degree one and
parameters $\delta_i\in F$. The identity $\Dick_r(z,0)=z^r$
includes all power components of prime degree at least five.
The type does not specify the parameters and need not be realised
by an exceptional polynomial for a given prime.
For a polynomial in $K[u][x]$, the type is taken with respect to $x$
over $K(u)$, unless another field is specified.

{\begin{lemma}\label{lem:types}
Let $p$ be odd and let $\psi\in\Fp[s]$ be exceptional of degree $n$ with
$2\le n<p$. Then $n$ is odd and $\psi$ has one of the types of degree
$n$ over $\Fp$. In particular no polynomial of even degree in that range
is exceptional over $\Fp$.
\end{lemma}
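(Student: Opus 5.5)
The plan is to apply (C) directly and then check the arithmetic constraints imposed by exceptionality of each constituent. Since $2\le n<p$, we have $\gcd(n,p)=1$. So (C) writes $\psi=\alpha_0\circ\chi_1\circ\alpha_1\circ\cdots\circ\chi_k\circ\alpha_k$ with affine $\alpha_i$ over $\Fp$. Each $\chi_i$ is $x^{n_i}$ or $\Dick_{n_i}(x,\delta_i)$ with $n_i$ prime, and every constituent is exceptional. Since degrees multiply under composition, $n=\prod n_i$ with each $n_i\le n<p$, so every $n_i$ is prime to $p$. It therefore suffices to show that each $n_i$ is odd, and that when $n_i=3$ the constituent can be rewritten, after adjusting the adjacent affine maps, as $z^3$.

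\emph{Oddness.} If $n_i=2$ and $\chi_i=x^2$, exceptionality requires $\gcd(2,p-1)=1$. This fails because $p$ is odd. If $\chi_i=\Dick_2(x,\delta)$ with $\delta\ne0$, exceptionality requires $\gcd(2,p^2-1)=1$, which fails for the same reason. (Alternatively, any degree-two polynomial over $\Fp$ with $p$ odd is not injective.) Hence every $n_i$ is an odd prime, and $n$ is odd.

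\emph{Degree $3$ and degree $\ge5$ constituents.} For $n_i=3$ we must dispose of $\Dick_3(z,\delta)=z^3-3\delta z$ with $\delta\ne0$. Exceptionality would need $\gcd(3,p^2-1)=1$. But for $p\ne3$ we always have $p^2\equiv1\pmod 3$, and $p=3$ is excluded since $3=n_i<p$. So $\chi_i=z^3$. For $n_i\ge5$, both $x^{n_i}=\Dick_{n_i}(x,0)$ and $\Dick_{n_i}(x,\delta_i)$ are of the allowed form with $\delta_i\in\Fp$. Thus $\psi$ has type $(n_1,\dots,n_k)$ over $\Fp$. The final sentence of the lemma follows immediately: an even $n$ in the range $2\le n<p$ cannot be a product of odd primes.

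The only point needing care is confirming the hypotheses of (C), namely $\gcd(n,p)=1$ and that the constituents themselves are exceptional. Both are included in the statement of (C) as quoted. I therefore expect no genuine obstacle beyond the small case check for $\Dick_3$.
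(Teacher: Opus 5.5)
Your proof is correct and is essentially the paper's argument. You apply \textup{(C)}, use $\gcd(2,p-1)\ne1$ and $\gcd(2,p^2-1)\ne1$ to force every constituent to have odd prime degree, rule out $\Dick_3(x,\delta)$ with $\delta\ne0$ via $3\mid p^2-1$ (with $p=3$ excluded because $3\le n<p$), and read powers of degree at least five as $\Dick_{r}(x,0)$. Your remark about adjusting adjacent affine maps is not needed, since in the degree-three case the constituent is already $z^3$.
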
}

{\begin{proof}
Since $n<p$ we have $\gcd(n,p)=1$, so \textup{(C)} applies and writes
$\psi=\alpha_0\circ\chi_1\circ\alpha_1\circ\cdots\circ\chi_k\circ\alpha_k$
with the $\alpha_i$ of degree one over $\Fp$, each $\chi_i$ equal to
$x^{r_i}$ or to $\Dick_{r_i}(x,\delta_i)$ with $r_i$ prime, and every
$\chi_i$ itself exceptional. Fix $i$ and note that $r_i$ divides
$n$, so $r_i\ne p$. If $\chi_i=x^{r_i}$, or if
$\chi_i=\Dick_{r_i}(x,0)=x^{r_i}$, exceptionality means
$\gcd(r_i,p-1)=1$; as $p-1$ is even, $r_i$ is odd, hence $r_i\ge3$. If
$\chi_i=\Dick_{r_i}(x,\delta_i)$ with $\delta_i\ne0$, exceptionality
means $\gcd(r_i,p^2-1)=1$; as $p^2-1$ is even, $r_i$ is odd, and
$r_i=3$ is impossible: for $p\ne3$ because $3\mid p^2-1$, and for $p=3$
because $r_i\ne p$. Hence $r_i\ge5$. In every case $r_i$ is an odd
prime, so $n=r_1\cdots r_k$ is odd. Replacing every power
$x^{r_i}$ with $r_i\ge5$ by $\Dick_{r_i}(x,0)$ gives the type
$(r_1,\ldots,r_k)$ over $\Fp$.
\end{proof}}

Let $t(1)=1$, and let $t(n)$ be the number of types of degree $n$.
For $n>1$,
\[
 t(n)=\sum_{\substack{r\mid n\\r\text{ odd prime}}}t(n/r),
\]
so $t(n)=0$ for even $n$. If $n=\prod_r r^{a_r}$ is odd, then
$t(n)=(\sum_r a_r)!/\prod_r a_r!$.
Induction gives $t(n)\le n^{3/2}$, since
\[
 \sum_{r\text{ odd prime}}r^{-3/2}
 \le\sum_{j\ge0}(3+2j)^{-3/2}
 \le3^{-3/2}+\frac12\int_3^\infty x^{-3/2}\,dx
 =\frac4{3\sqrt3}<1.
\]

A priori the compositional type may vary with the line; the normal form
below records the coefficient bounds used in the subsequent arguments.

\begin{proposition}\label{prop:structure}
Let $D\ge3$ and $p>D^4$. Let $\lambda$ be omitted by the pencil $v$
and assume $g(v)\ne0$, so that $r(v)=D$. Then, in coordinates $(u,s)$
with $v=\partial_s$ and $u$ constant along $v$, there are $a\ne0$, an
affine $b(u)=\beta u+\gamma$ and polynomials $\chi_u$, $c$ such that
\begin{equation}\label{eq:normalform}
 f(u,s)=a\,\chi_u\bigl(s+b(u)\bigr)+\lambda s+c(u),
\end{equation}
where $\chi_u\in\Fp[u][z]$ is monic of $z$-degree $D$, its
coefficients of $z^{D-1}$ and $z^0$ vanish, and
$\deg_u[\chi_u]_j\le D-j$. For every $u_0\in\Fp$, the
specialisation $\chi_{u_0}$ is exceptional over $\Fp$.
\end{proposition}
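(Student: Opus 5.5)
Work in the adapted coordinates $(u,s)$ of Section~\ref{sec:pivot} and write $f=\sum_{k=0}^{D}c_k(u)s^k$ with $\deg c_k\le D-k$. Since $g(v)\ne0$, the coefficient $c_D$ is the nonzero constant $g(v)$; set $a:=c_D$. Two normalisations produce \eqref{eq:normalform}.

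\emph{Shift.} Because $D<p$, $D$ is invertible in $\Fp$. Put $b(u):=c_{D-1}(u)/(Da)$. Since $\deg c_{D-1}\le1$, this is an affine polynomial $\beta u+\gamma$. Expand $h(u,z):=f(u,z-b(u))-\lambda(z-b(u))$ in powers of $z$. The coefficient of $z^{D-1}$ is $c_{D-1}-Da\,b=0$. The subtraction of $\lambda s$ affects only the coefficient of $z^1$ (and $z^0$), and does not affect $z^{D-1}$ because $D\ge3$.

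\emph{Constant term.} Let $c(u)$ denote the $z^0$-coefficient of $h$, which is $f(u,-b(u))+\lambda b(u)$. Define $\chi_u(z):=(h(u,z)-c(u))/a$. Then $\chi_u$ is monic of degree $D$, and its coefficients of $z^{D-1}$ and $z^0$ vanish. Substituting $z=s+b(u)$ gives exactly $f=a\chi_u(s+b(u))+\lambda s+c(u)$.

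\emph{Degree bounds.} Give $u$ weight $1$ and $z$ weight $1$. The substitution $s=z-b(u)$ with $\deg b\le1$ preserves the bound "total degree $\le D$" in $(u,z)$. Hence the $z^j$-coefficient of $f(u,z-b(u))$ has $u$-degree at most $D-j$. The correction $-\lambda(z-b(u))$ contributes only to $j=1$ (a constant) and to $j=0$ (degree $\le1$), both within the bounds. Dividing by $a$ and removing the $z^0$ term preserves the bounds, so $\deg_u[\chi_u]_j\le D-j$.

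\emph{Exceptionality.} Fix $u_0\in\Fp$. The restriction $\psi_{\ell,\lambda}(s)$ to the line $u=u_0$ equals $a\chi_{u_0}(s+b(u_0))+c(u_0)$. This is $\chi_{u_0}$ precomposed and postcomposed with degree-one affine maps over $\Fp$, since $a\ne0$. By Theorem~\ref{thm:trichotomy}(A), $\psi_{\ell,\lambda}$ is a permutation polynomial of degree $D$. Since $p>D^4$, fact \textup{(W)} makes it exceptional.

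Exceptionality is preserved under composition with affine maps over $\Fp$. Indeed, $\Phi$ for $\alpha\circ\chi\circ\beta$ is a nonzero scalar times $\Phi_\chi$ evaluated at affinely transformed variables, and such a transformation neither creates nor destroys absolutely irreducible $\Fp$-factors. Therefore $\chi_{u_0}$ is exceptional.

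The only delicate points are bookkeeping: checking that the $\lambda s$ correction does not disturb the $z^{D-1}$ coefficient (this is where $D\ge3$ is used) and verifying the weighted degree bound after the affine shift. Neither presents a real obstacle.
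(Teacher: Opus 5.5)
Your proof is correct and follows the paper's argument essentially step for step: the same shift $b(u)=c_{D-1}(u)/(Da)$, the same removal of the $z^0$-coefficient to define $c(u)$ and $\chi_u$, the same total-degree count under $s\mapsto z-b(u)$, and the same reduction of exceptionality of $\chi_{u_0}$ to \textup{(W)} plus invariance under affine compositions (you also supply the one-line justification of that invariance, which the paper omits). One citation should be fixed: take the permutation property of $\psi_{\ell,\lambda}$ from Proposition~\ref{prop:perm}, and read off degree $D$ from your own expression $a\chi_{u_0}(s+b(u_0))+c(u_0)$, rather than citing Theorem~\ref{thm:trichotomy}(A), which assumes the graph is not a cylinder---a hypothesis the proposition does not make.
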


\begin{proof}
Throughout we work in the adapted coordinates $(u,s)$ of
Section~\ref{sec:pivot}, so that $f=\sum_kc_k(u)s^{k}$ with
$\deg c_k\le D-k$, and we write $\psi_u:=\psi_{\ell_u,\lambda}$ for the
restriction attached to the line $\ell_u$ of direction $v$ through
$(u,0)$, that is $\psi_u(s)=f(u,s)-\lambda s$. For a univariate
polynomial $\chi$ we write $[\chi]_j$ for its coefficient of $z^{j}$.

\emph{Step 1: every restriction has degree $D$, with one and the same
leading coefficient.} The coefficient of $s^{D}$ in $f$ is $c_D$, and
$\deg c_D\le D-D=0$, so $c_D$ is the constant $g(v)=:a\ne0$. Since
$D\ge3$, the correction $-\lambda s$ does not affect the coefficients of
$s^{D}$ and $s^{D-1}$. Hence $\deg\psi_u=D$ for every $u\in\Fp$, with
leading coefficient $a$, and $[\psi_u]_{D-1}=c_{D-1}(u)$, a polynomial in
$u$ of degree at most $D-(D-1)=1$.

\emph{Step 2: every restriction is exceptional.} As $\lambda$ is omitted
by the pencil $v$, Proposition~\ref{prop:perm} makes $\psi_u$ a
permutation polynomial of $\Fp$ for every $u$. Its degree is $D$ and
$p>D^4$, so \textup{(W)} applies and $\psi_u$ is exceptional.
{Note that $D<p$, so $\gcd(D,p)=1$ and \textup{(C)} is available for
these polynomials as well.}

\emph{Step 3: the normalising substitution.} Since $D<p$, the integer
$D$ is invertible in $\Fp$, and we may set
\[
 b(u):=\frac{c_{D-1}(u)}{a\,D}\in\Fp[u],
\]
which is affine, $b=\beta u+\gamma$, by the degree bound of Step 1. Put
\[
 \Theta(u,z):=\frac1a\Bigl[f\bigl(u,z-b(u)\bigr)-\lambda\bigl(z-b(u)\bigr)\Bigr]
 \in\Fp[u,z],
\]
and then
\[
 \chi_u(z):=\Theta(u,z)-\Theta(u,0),\qquad c(u):=a\,\Theta(u,0).
\]
Substituting $z=s+b(u)$ in the definition of $\Theta$ gives
$a\,\Theta\bigl(u,s+b(u)\bigr)=f(u,s)-\lambda s$, whence
\[
 f(u,s)=a\,\chi_u\bigl(s+b(u)\bigr)+\lambda s+c(u),
\]
which is \eqref{eq:normalform}. We stress that this is an identity in
$\Fp[u,s]$: the coefficients of $\chi_u$ and the polynomial $c$ are
constructed as elements of $\Fp[u]$, not merely specialised at each
$u\in\Fp$.

\emph{Step 4: the normalised coefficients.}
Expanding $f(u,z-b(u))=\sum_kc_k(u)\bigl(z-b(u)\bigr)^{k}$ and using
$D\ge3$, which keeps the term $-\lambda(z-b)$ away from the degrees $D$
and $D-1$, we get
\[
 a\,[\Theta]_D=c_D=a,\qquad
 a\,[\Theta]_{D-1}=c_{D-1}(u)-D\,a\,b(u)=0
\]
by the choice of $b$: the first comes from $k=D$ alone, the second from
$k=D$ and $k=D-1$ only. Thus $\Theta$, and with it $\chi_u$, is monic of
degree $D$ in $z$ with zero coefficient of $z^{D-1}$, while
$\chi_u(0)=0$ holds by
construction.

\emph{Step 5: the degree bounds in $u$.} The polynomial $f$ has total
degree $D$ in $(u,s)$, and the substitution $s\mapsto z-b(u)$ sends a
monomial $u^{i}s^{k}$ to $u^{i}\bigl(z-b(u)\bigr)^{k}$, of total degree
at most $i+k$ because $\deg b\le1$; so $f(u,z-b(u))$ has total degree at
most $D$ in $(u,z)$. The term $\lambda(z-b(u))$ has total degree at most
$1\le D$. Hence $\deg\Theta\le D$, and the coefficient of $z^{j}$ in
$\Theta$ has degree at most $D-j$ in $u$. For $j\ge1$ that coefficient
is $[\chi_u]_j$, which gives $\deg_u[\chi_u]_j\le D-j$; for $j=0$ there
is nothing to prove since $[\chi_u]_0=0$.

\emph{Step 6: $\chi_u$ is exceptional for every $u\in\Fp$.} Fix
$u_0\in\Fp$ and put $b_0:=b(u_0)$. By the definition of $\Theta$ we have
$a\,\Theta(u_0,z)=\psi_{u_0}(z-b_0)$, so
\[
 \chi_{u_0}(z)=\alpha_0\circ\psi_{u_0}\circ\alpha_1(z),
 \qquad
 \alpha_1(z)=z-b_0,\quad
 \alpha_0(w)=\frac1a\bigl(w-\psi_{u_0}(-b_0)\bigr),
\]
a composition of $\psi_{u_0}$ with two polynomials of degree one over
$\Fp$. Exceptionality is unchanged by such compositions. By Step 2 the polynomial $\psi_{u_0}$ is
exceptional, hence so is $\chi_{u_0}$.
\end{proof}

\begin{remark}\label{rem:degrees}
{By Lemma~\ref{lem:types} the polynomials $\chi_{u_0}$, $u_0\in\Fp$, of
Proposition~\ref{prop:structure} have one of the $t(D)$ types of degree
$D$ over $\Fp$. If $D$ is even there is no such type, so $\BB_{\rm ar}(f)=\varnothing$. If $D$ is an odd prime,
$t(D)=1$: the type consists of $z^3$ for $D=3$, and of
$\Dick_D(z,\delta)$ for $D\ge5$, including $\delta=0$. If $D$
is an odd composite integer, decomposable types occur as well; the
argument below treats all odd degrees uniformly.}
\end{remark}

\subsection*{Rigidity of moving line restrictions}

{The normalisation of monic components carried out in
Lemma~\ref{lem:normal-components} is related to the approximate
roots of \cite{Bodin}, although we use no result from that paper: the
construction below is self-contained and proceeds by comparing
coefficients.} Bodin, Ch\`eze and D\`ebes \cite{BCD} study generic
restrictions $f(t+\alpha x,x)$ and differentiate the resulting polynomial
identities, following an identity of Wood \cite{Wood}; the operator
$\partial_m-x\partial_u$ below comes from the same change of variables.
The conclusion we need is that a fixed exceptional compositional type in
a moving direction forces a polynomial in one linear form, up to an
affine term. {Types are those defined before
Lemma~\ref{lem:types}.}

\begin{lemma}\label{lem:normal-components}
Let $R\in K[u][x]$ be monic of $x$-degree $n$ and total degree at most
$n$, where $\operatorname{char}K=0$ or $\operatorname{char}K>n$. Suppose
that, over an algebraic extension of $K(u)$,
\[
 R=\alpha_0\circ\chi_1\circ\alpha_1\circ\cdots\circ\chi_k\circ\alpha_k ,
\]
where each $\alpha_i$ has degree one and each $\chi_i$ is a power
$z^{r_i}$ with $r_i\ge3$ prime, or a Dickson polynomial
$\Dick_{r_i}(z,\delta_i)$ with $r_i\ge5$ prime. If its outer degree is
$r$ and $n=re$, it can be written
\[
 R=\Phi_r(H,\delta)+q(u),\qquad
 \Phi_r(z,\delta)=z^r\ \text{or}\ \Dick_r(z,\delta),
\]
with $H\in K[u][x]$ monic of $x$-degree $e$ and total degree at most $e$,
$q\in K[u]$, $\deg q\le n$, and, in the Dickson case, $\delta\in K[u]$,
$\deg\delta\le2e$. The same normalisation applies recursively to $H$.
\end{lemma}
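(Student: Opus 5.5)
We will work over an algebraic extension $L$ of $K(u)$ where the decomposition holds, and split
\[
 R=\alpha_0\circ\chi_1\circ G,\qquad G:=\alpha_1\circ\chi_2\circ\cdots\circ\chi_k\circ\alpha_k\in L[x],\quad \deg_xG=e .
\]
Write $\alpha_0(z)=a_0z+b_0$ and $\alpha_1$-type affine maps absorbed into $G$. Since $R$ is monic, the leading coefficient of $\alpha_0\circ\chi_1\circ G$ is $a_0\,\mathrm{lc}(G)^r$. We normalise $G$ by an affine change $G=\gamma H+\eta$, with $\gamma^r=1/a_0$ and $\eta$ chosen so that $H$ is monic; absorbing $\gamma,\eta$ into $\chi_1$ uses the standard identities $(\gamma z+\eta)^r$ and $\Dick_r(\gamma z,\delta)=\gamma^r\Dick_r(z,\delta\gamma^{-2})$. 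The translation $\eta$ is the delicate point. In the power case we choose $\eta$ so that the coefficient of $x^{e-1}$ of $H$ is the one forced by $R$. In the Dickson case we must show that the inner translation can be taken to be zero, that is, that $\chi_1\circ(z+\eta)$ is again of the form $\Phi_r(z,\delta')$ plus a constant. This holds only for $\eta=0$ when $r\ge3$ and $\delta\ne0$, by comparing the $z^{r-1}$ coefficient, which vanishes for Dickson polynomials. Hence we fix $\eta$ by requiring that the $x^{n-1}$ coefficient of $R$ equal $r$ times the $x^{e-1}$ coefficient of $H$.

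This reduces the statement to $R=\Phi_r(H,\delta)+q$ with $H\in L[x]$ monic, $\delta\in L$, $q\in L$. The main step is to show that $H$, $\delta$ and $q$ actually lie in $K[u]$ with the claimed degree bounds. For this we reconstruct $H$ by comparing coefficients, as in the approximate-root construction. Write
\[
 H=x^e+h_{e-1}x^{e-1}+\cdots+h_0 .
\]
The top $e$ coefficients of $R$, namely those of $x^{n-1},\dots,x^{n-e}$, determine $h_{e-1},\dots,h_0$ by a triangular system. Indeed, the coefficient of $x^{n-j}$ in $H^r$, and likewise in $\Dick_r(H,\delta)$ as long as $j<2e$, is $r\,h_{e-j}$ plus a polynomial in $h_{e-1},\dots,h_{e-j+1}$. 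Since $\delta$ contributes only via $\delta H^{r-2}$, of degree $n-2e$, it does not affect these coefficients for $j\le e<2e$. Because $r<\operatorname{char}K$ is invertible, induction gives each $h_{e-j}$ as a polynomial with coefficients in $K$ in the coefficients of $R$, so $h_{e-j}\in K[u]$. For the degree bound we use weighted homogeneity: give $x$ weight $1$ and $u$ weight $1$. The relevant polynomial expressing $h_{e-j}$ is weighted homogeneous of weight $j$ in the coefficients $R_{n-i}$, where $R_{n-i}$ has weight $i$ and $\deg_u R_{n-i}\le i$. Hence $\deg h_{e-j}\le j$, and $H$ has total degree at most $e$.

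Next $\delta$ is obtained from the coefficient of $x^{n-2e}$. This coefficient equals $-r\delta$ plus the contribution of $H^r$, so $\delta\in K[u]$ with $\deg\delta\le 2e$ by the same weight count. Then $q=R-\Phi_r(H,\delta)$ lies in $L\cap K[u][x]$. Since its $x$-degree is $0$, we get $q\in K[u]$, and its total degree bound gives $\deg q\le n$. The remaining point, and the one we expect to be the main obstacle, is justifying that the $L$-decomposition of the previous paragraph is consistent with this $K[u]$-reconstruction. This follows from uniqueness: the triangular system has a unique solution in $L$, so the $L$-version of $H$ coincides with the constructed one. Finally, $H$ inherits a decomposition $\chi_2\circ\cdots$ over $L$ of the same kind, since $H=\gamma^{-1}(G-\eta)$, and $H$ is monic with total degree at most $e$. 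So the same argument applies to $H$, which gives the recursion.
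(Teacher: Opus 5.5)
Your route is the paper's. You normalise the outer factor over the algebraic extension and recover the monic $H$ from the coefficients of $x^{n-1},\dots,x^{n-e}$ of $R$ by a triangular system, which is solvable because $r$ is invertible. Weighted homogeneity gives $h_j\in K[u]$ with $\deg h_j\le j$, you read $\delta$ off the coefficient of $x^{n-2e}$, take $q=R-\Phi_r(H,\delta)$, and recurse. Your uniqueness remark is exactly how the paper brings the coefficients back from the extension to $K[u]$ without a Galois argument.

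One step in your first paragraph is wrong as written: the translation $\eta$. It should simply be deleted. Since $\alpha_1$ is already inside $G$, take $H:=\mathrm{lc}(G)^{-1}G$, that is, $\gamma=\mathrm{lc}(G)$ and $\eta=0$. Then $G^r=\gamma^rH^r$ and $\Dick_r(G,\delta')=\gamma^r\Dick_r(H,\gamma^{-2}\delta')$, and comparing leading coefficients gives $a_0\gamma^r=1$ automatically. This is the paper's $H=\sigma^{-1}B$.

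Your recipe fails for two reasons. First, a translation never changes the leading coefficient, so it cannot make $H$ monic. Monicity forces $\gamma=\mathrm{lc}(G)$; the condition $\gamma^r=1/a_0$ alone fixes $\gamma$ only up to an $r$-th root of unity. Second, for $e\ge2$ the condition you use to "fix" $\eta$ is vacuous. It asks the $x^{e-1}$ coefficient of $H$ to match $R$, or the $x^{n-1}$ coefficient of $R$ to be $r$ times it. But $\eta$ only moves the constant term of $H$, and that equality holds automatically. So your power-case prescription admits $\eta\ne0$, and then $(\gamma H+\eta)^r$ is not $\gamma^rH^r$ plus a constant.

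Your own comparison of $z^{r-1}$ coefficients in the Dickson case applies verbatim to $z^r$, and it needs no assumption $\delta\ne0$. It shows $\eta=0$ is forced in both cases. With that correction, the rest of your proof goes through unchanged.
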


\begin{proof}
\emph{Step 1.} Let $\Omega$ be the algebraic extension of $K(u)$ over
which $R$ has the given type. Its outermost factor is $\chi_1$, of degree
$r=r_1$, so that $n=re$ with $e=r_2\cdots r_k$; thus $\chi_1(z)=z^{r}$ or
$\chi_1(z)=\Dick_{r}(z,\delta')$ for some parameter $\delta'\in\Omega$,
and in both cases $\chi_1(z)=\Phi_r(z,\delta')$ with $\delta'=0$ in the
power case. Hence
\[
 R=\mu\,\Phi_r(B,\delta')+\nu,\qquad \mu\in\Omega^{*},\ \nu\in\Omega,
\]
where $B\in\Omega[x]$ gathers the inner factors $\chi_2,\dots,\chi_k$
together with the affine maps $\alpha_1,\dots,\alpha_k$; in particular
$\deg B=e$. Let $\sigma\ne0$ be the leading coefficient of $B$ and put
$H:=\sigma^{-1}B$, monic of degree $e$. In the power case
$B^{r}=\sigma^{r}H^{r}$, and in the Dickson case the weighted
homogeneity identity $\Dick_r(az,a^{2}\delta)=a^{r}\Dick_r(z,\delta)$
gives $\Dick_r(B,\delta')=\sigma^{r}\Dick_r(H,\sigma^{-2}\delta')$.
Setting $\delta:=\sigma^{-2}\delta'$, in both cases $\Phi_r(H,\delta)$ is
monic of degree $re=n$, so comparing leading coefficients with the monic
$R$ gives $\mu\sigma^{r}=1$ and
\[
 R=\Phi_r(H,\delta)+q,\qquad q:=\nu .
\]
So far $H$, $\delta$ and $q$ are only known to have coefficients in
$\Omega$.

\emph{Step 2.} Write $H=x^{e}+h_1x^{e-1}+\cdots+h_e$ and set $h_0:=1$.
Expanding the $r$-th power,
\[
 [x^{n-j}]H^{r}=\sum_{j_1+\cdots+j_r=j}h_{j_1}\cdots h_{j_r},
\]
the sum being over non-negative indices. The $r$ terms in which one index
equals $j$ and all the others vanish contribute $rh_j$, while every other
term involves only $h_1,\dots,h_{j-1}$. Hence
\[
 [x^{n-j}]H^{r}=r\,h_j+P_j(h_1,\dots,h_{j-1}),
\]
where $P_j$ has integer coefficients and, once $h_i$ is given the weight
$i$, is weighted homogeneous of weight $j$: the factors equal to $h_0=1$ carry
weight zero, and the indices of the remaining factors of each monomial
add up to $j$.

Compare this with $R$ for $1\le j\le e$. The term $q$ does not interfere,
because $n-j\ge n-e=(r-1)e>0$. In the Dickson case the terms of
$\Dick_r(H,\delta)$ other than $H^{r}$ are the $\delta^{i}H^{r-2i}$ with
$i\ge1$, of $x$-degree at most $(r-2)e$, and $n-j\ge(r-1)e>(r-2)e$, so
they do not interfere either. Therefore
\[
 [x^{n-j}]R=r\,h_j+P_j(h_1,\dots,h_{j-1}),\qquad 1\le j\le e .
\]
Since $r\le n$, the integer $r$ is invertible in $K$, and these relations
determine $h_1,\dots,h_e$ recursively,
\[
 h_j=\frac1r\Bigl([x^{n-j}]R-P_j(h_1,\dots,h_{j-1})\Bigr).
\]
Two consequences follow by induction on $j$. First $h_j\in K[u]$, because
$[x^{n-j}]R\in K[u]$ and $P_j$ has integer coefficients: this is what brings
$h_j$ from $\Omega$ back to $K$, and it requires no Galois argument. Second
$\deg h_j\le j$: the total degree of $R$ is at most $n$, so $[x^{n-j}]R$
has degree at most $j$ in $u$, and $P_j$ is weighted homogeneous of weight $j$, so
each of its monomials $h_{i_1}\cdots h_{i_k}$ with $i_1+\cdots+i_k=j$ has
degree at most $j$. Hence $H\in K[u][x]$ and its total degree is at most
$e$.

\emph{Step 3.} In the Dickson case, compare the coefficients of
$x^{n-2e}$. Among the $\delta^{i}H^{r-2i}$ only $i=0$ and $i=1$ reach
that degree, and $H^{r-2}$ is monic of degree $(r-2)e=n-2e$, so
\[
 [x^{n-2e}]R=[x^{n-2e}]H^{r}-r\delta ,
\]
again with no contribution from $q$, because $n-2e=(r-2)e>0$. Therefore
\[
 \delta=\frac1r\Bigl([x^{n-2e}]H^{r}-[x^{n-2e}]R\Bigr)\in K[u],
\]
and both terms have degree at most $2e$ in $u$, whence
$\deg\delta\le2e$. Finally $q=R-\Phi_r(H,\delta)$ lies in $K[u]$ as well,
and both $R$ and $\Phi_r(H,\delta)$ have total degree at most $n$ --- the
latter because $\Dick_r$ is weighted homogeneous of degree $r$ for the
weights $1$ and $2$, while $\deg H\le e$ and $\deg\delta\le2e$. Thus
$\deg q\le n$.

\emph{Step 4.} The polynomial $H$ is an affine rescaling of $B$, so it is
monic of degree $e$ and has the type
$\alpha\circ\chi_2\circ\alpha_2\circ\cdots\circ\chi_k\circ\alpha_k$ for a
suitable affine $\alpha$, that is, the given type with its outer factor
removed. If $e\ge3$ the lemma applies to $H$ and the normalisation
propagates inwards; if $e=1$ there is nothing to prove.
\end{proof}

For a monic polynomial of a fixed type, the preceding proof determines
the outer normalised decomposition $R=\Phi_r(H,\delta)+q$ uniquely.
Its \emph{outer additive term} means this polynomial $q$.
In particular, saying that this term vanishes means $q=0$ in that
normalised decomposition.

\begin{lemma}\label{lem:shear-rigidity}
Let $k$ be a field of characteristic zero or of characteristic
$p>n\ge3$, let $m$ be an indeterminate over $k$ and put $K=k(m)$. We
write $\partial_m$ for the usual derivative of $K$ with respect to $m$,
extended to $K[u]$ and to $K[x,u]$ by acting on the coefficients alone,
and $\partial_u$ for the derivative with respect to $u$. Let
$F_0\in k[x,y]$ have total degree at most $n$ and suppose that
\begin{equation}\label{eq:shear-family}
 F_0(x,u+mx)=a\,\Psi(x,u)+\lambda x+c(u),
\end{equation}
where $a\in K^{*}$, $\lambda\in K$, $c\in K[u]$, and $\Psi\in K[x,u]$ is
monic of $x$-degree $n$, has total degree at most $n$, has one of the
types above over an algebraic extension of $K(u)$ and has vanishing
outer additive term. Then there are a
nonzero linear form $L\in k[x,y]$, a polynomial $h\in k[z]$ and an
affine polynomial $\varphi\in k[x,y]$ such that
\[
 F_0=h(L)+\varphi .
\]
In particular, if $k=\Fp$, the graph of $F_0$ is a cylinder.
\end{lemma}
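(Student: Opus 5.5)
The plan is to turn the hypothesis into a first-order differential identity and then peel off the compositional layers of $\Psi$ one at a time.

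\emph{The differential identity.} Put $G(x,u):=F_0(x,u+mx)\in K[x,u]$. Since $F_0$ has coefficients in $k$, the chain rule gives $\partial_mG=xF_{0,y}(x,u+mx)=x\,\partial_uG$. So the operator $\mathcal D:=\partial_m-x\partial_u$ annihilates $G$. Conversely, an element of $K[x,u]$ killed by $\mathcal D$ is of the form $\widetilde G(x,u+mx)$ with $\widetilde G$ free of $m$. I would check this by substituting $u=y-mx$ and using $\operatorname{char}>n$ together with the degree bounds to rule out $p$-th powers in $m$.

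Apply $\mathcal D$ to the right-hand side of \eqref{eq:shear-family}. By Lemma~\ref{lem:normal-components} and the vanishing of the outer additive term, $\Psi=\Phi_r(H,\delta)$ with $H\in K[u][x]$ monic of degree $e=n/r$ and $\delta\in K[u]$. This gives
\[
 a'\Phi_r(H,\delta)+a\,\Phi_r'(H,\delta)\,\mathcal DH+a\,\partial_\delta\Phi_r(H,\delta)\,\mathcal D\delta=(c_u-\lambda')x-c'(u),
\]
where the right-hand side, call it $\ell$, has $x$-degree at most $1$. In the power case this reads $H^{r-1}\bigl(a'H+ra\,\mathcal DH\bigr)=\ell$. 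Since $(r-1)e\ge2>\deg_x\ell$, we get $\ell=0$ and $\mathcal DH=-\tfrac{a'}{ra}H$. In the Dickson case I would use the identities $\Dick_r'=r\,\Dick_{r-1}(\cdot,\delta)\cdot(\ldots)$ and $\partial_\delta\Dick_r$, which for $\delta\ne0$ factor through $H^2-4\delta$ and the Chebyshev-type factor. Divisibility by the high-degree factor of $\Phi_r'(H,\delta)$ then gives the same conclusion: $\ell=0$, $\mathcal D\delta=-\tfrac{2a'}{ra}\delta$ and $\mathcal DH=-\tfrac{a'}{ra}H$. The key point is that $\ell=0$ forces $c$ to be affine in $u$, with $c_u=\lambda'$ constant in $u$, so the affine correction is harmless.

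\emph{Recursion.} Replace $(a,\Psi)$ by $(\sigma,H)$, where $\sigma^r=a$ up to an algebraic extension. Then $\mathcal D(\sigma H)=0$ up to additive terms, so the inner factor satisfies an identity of the same shape with degree $e$. The inner additive term $q$ of $H$ (from the recursive normalisation) is no longer assumed to vanish; it is absorbed because $\mathcal D$ applied to it must be affine in $x$. Iterating down to the innermost affine factor $\alpha_k(x)=\rho x+\beta(u)$, the relation $\mathcal D(\rho x+\beta)=\kappa(\rho x+\beta)+\text{affine}$ forces $\beta$ linear in $u$, with $\partial_m\beta=x\beta_u$ only compatible if $\rho x+\beta=\rho'(x\cdot\ast+u\cdot\ast)$. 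Equivalently, the innermost factor is $\widetilde L(x,u+mx)$ for a linear form $\widetilde L$ over $k$, up to a scalar in $K$.

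\emph{Reassembly and main obstacle.} Reassembling the layers shows $G=h_m\bigl(L(x,u+mx)\bigr)+(\text{affine})$ for some $h_m\in\overline{K}[z]$. Undoing the shear and comparing coefficients, which lie in $k$, shows that $h$ can be taken over $k$, giving $F_0=h(L)+\varphi$. Then $A_v$ is constant for $v$ with $L(v)=0$, so Lemma~\ref{lem:cylinder} gives the cylinder.

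I expect the main obstacle to be the Dickson-case divisibility argument, that is, extracting $\mathcal DH$ and $\mathcal D\delta$ separately from the single identity. I would first try to reduce modulo $H^2-4\delta$ and modulo the squarefree factor of $\Dick_r'$. A secondary difficulty is controlling the scalars $a'/a$ through the recursion, so that they can be removed by rescaling with a constant of $K$ rather than altering $L$.
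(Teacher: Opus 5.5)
\textbf{Overall.} Your route is the paper's: the operator $\mathcal D=\partial_m-x\partial_u$, the identity obtained by applying it to \eqref{eq:shear-family}, and a layer-by-layer $x$-degree comparison through Lemma~\ref{lem:normal-components}. Your renormalisation $\sigma=a^{1/r}$ is equivalent to the paper's eigen-equation $\mathcal DR+N\rho R=0$ with $\rho=\partial_ma/(na)$. The paper's form has the advantage of not extending $\partial_m$ to $K(\sigma)$. There are two gaps.

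\textbf{The descent to $k$.} The step that produces a linear form over $k$ fails in characteristic $p$, which is exactly the case needed for $k=\Fp$.
\begin{itemize}
\item Your converse claim is false there. You claim that an element killed by $\mathcal D$ becomes free of $m$ after $u=y-mx$. But $\partial_m$ kills all of $k(m^p)$, e.g.\ $\mathcal D(m^px)=0$. The hypotheses also give no $m$-degree bound on coefficients in $K$, let alone in $K(\sigma)$.
\item At the innermost layer, $\mathcal D(x+\beta u+\gamma)=-\rho(x+\beta u+\gamma)$ gives only $\beta=\rho$, $\partial_m\rho=-\rho^2$ and $\partial_m\gamma=-\rho\gamma$. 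For instance $\rho=1/(m+m^p)$ satisfies these and yields the form $m^px+y$, which is not over $k$.
\end{itemize}
You need extra information, and either of the following supplies it.
\begin{itemize}
\item Use that $a=F_{0,n}(1,m)\in k[m]$ has degree at most $n<p$. Then, for $\rho\ne0$, the $\partial_m$-constant $\rho^{-1}-m=(na-m\,\partial_ma)/\partial_ma$ is a quotient of polynomials of degree $<p$, hence lies in $k$.
\item Alternatively, do as the paper does in the last step of its proof. Write $F_0=a\psi\bigl((1-\rho m)x+\rho y\bigr)+\text{affine}$ over $K$. The condition that $v_1\partial_xF_0+v_2\partial_yF_0$ is constant is a homogeneous linear system with coefficients in $k$. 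It has the nonzero solution $(\rho,\rho m-1)$ over $K$, hence a nonzero solution over $k$.
\end{itemize}

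\textbf{The Dickson layer of prime degree.} You leave this step open, and here no degree count suffices. With $e=1$, once the top coefficient forces $\beta=\rho$, both summands $\partial_z\Dick_r\cdot(\mathcal Dz+\rho z)$ and $\partial_\delta\Dick_r\cdot(\mathcal D\delta+2\rho\delta)$ can reach $z$-degree $r-1$ and cancel.

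The paper closes this in two moves. First it uses weighted homogeneity, $r\Phi_r=z\partial_z\Phi_r+2\delta\partial_\delta\Phi_r$, to absorb your term $\partial_ma\,\Phi_r(H,\delta)$. Then it compares the coefficients of $z^{r-1},z^{r-2},z^{r-3}$, using $\Dick_r=z^r-r\delta z^{r-2}+\frac{r(r-3)}2\delta^2z^{r-4}-\cdots$ and $r\ge5$. The resulting relations end with $\delta\,\partial_u\delta=0$. Hence $\partial_u\delta=0$, and then $\mathcal Dz+\rho z=0$ and $\mathcal D\delta+2\rho\delta=0$.

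Any reduction modulo factors of $\Dick_r'$ must in the end produce these same relations. The direct coefficient comparison is the short way to get them.
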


\begin{proof}
Put
\[
 \mathcal D:=\partial_m-x\,\partial_u .
\]
Being a difference of two operators that satisfy the Leibniz rule,
$\mathcal D$ satisfies it as well; it acts on $K$ as $\partial_m$, and
$\mathcal Dx=0$, $\mathcal Du=-x$.

\emph{Step 1: the operator $\mathcal D$ annihilates substituted
polynomials.} Let $F\in k[x,y]$ and $G(x,u):=F(x,u+mx)$. The chain rule
gives
\[
 \partial_mG=x\,F_y(x,u+mx),\qquad \partial_uG=F_y(x,u+mx),
\]
whence $\mathcal DG=0$. This is the only use we make of the particular
shape of the substitution $y=u+mx$: the left-hand side of
\eqref{eq:shear-family} is annihilated by $\mathcal D$.

\emph{Step 2: the differential identity.} Put
\[
 \rho:=\frac{\partial_ma}{n\,a}\in K .
\]
Apply $\mathcal D$ to \eqref{eq:shear-family}. On the left we get $0$ by
Step 1. On the right, $\mathcal D(a\Psi)=(\partial_ma)\Psi+a\,\mathcal D\Psi$
because $\partial_ua=0$, next $\mathcal D(\lambda x)=(\partial_m\lambda)x$
because $\partial_u\lambda=0$ and $\mathcal Dx=0$, and finally
$\mathcal D\bigl(c(u)\bigr)=\partial_mc-x\,\partial_uc$. Dividing the
resulting identity by $a$ and using $\partial_ma/a=n\rho$ we obtain
\begin{equation}\label{eq:scaled-shear}
 \mathcal D\Psi+n\rho\Psi
 =-\frac1a\Bigl[\bigl(\partial_m\lambda-\partial_uc\bigr)x
   +\partial_mc\Bigr].
\end{equation}
All we shall retain from \eqref{eq:shear-family} is that the right-hand
side has $x$-degree at most one; we shall prove that this condition forces $\Psi$ to depend on
one linear form. We also record \eqref{eq:scaled-shear} itself, which is used again
in Step 6.

\emph{Step 3: the two implications to be proved.} Let $R\in K[x,u]$
be monic of $x$-degree $N$, where $3\le N\le n$, of total degree at
most $N$, and of one of the types
above {over some algebraic extension of $K(u)$, which is the
hypothesis under which Lemma~\ref{lem:normal-components} applies and
returns components with coefficients in $K[u]$}. We prove, simultaneously and by induction on the number of
factors of the type, that for such an $R$
\begin{align}
 \mathcal DR+N\rho R=0&\ \Longrightarrow\ R\in K[x+\rho u],
 \label{eq:eigen-rigidity}\\
 \deg_x\bigl(\mathcal DR+N\rho R\bigr)\le1
 &\ \Longrightarrow\ R\in K[x+\rho u]\ \text{ and }\ \mathcal DR+N\rho R=0,
 \label{eq:low-rigidity}
\end{align}
the second implication under the additional hypothesis that the outer
additive term of $R$ vanishes.

\emph{Step 4: the case of prime degree.} Let $N=r$ be prime. By
Lemma~\ref{lem:normal-components} we may write
\[
 R=\Phi_r(z,\delta)+q(u),\qquad z=x+b(u),\quad b=\beta u+\gamma,
\]
with $\beta,\gamma\in K$ and $q\in K[u]$ of degree at most $r$; in the
power case $\Phi_r(z)=z^r$ with $r\ge3$ and there is no parameter, in
the Dickson case $\Phi_r(z,\delta)=\Dick_r(z,\delta)$ with $r\ge5$ and
$\delta\in K[u]$ of degree at most two. Since $(x,u)\mapsto(z,u)$ is a
triangular change of variables, $x$-degree and $z$-degree agree on
$K[x,u]=K[z,u]$, and we may compare coefficients of powers of $z$.

The polynomials $\Phi_r$ are weighted homogeneous of degree $r$ for the
weights $1$ on $z$ and $2$ on $\delta$, that is
$r\Phi_r=z\,\partial_z\Phi_r+2\delta\,\partial_\delta\Phi_r$. Together
with the chain rule for $\mathcal D$ this gives
\begin{equation}\label{eq:eigen-prime}
 \mathcal DR+r\rho R
 =\partial_z\Phi_r\,\bigl(\mathcal Dz+\rho z\bigr)
  +\partial_\delta\Phi_r\,\bigl(\mathcal D\delta+2\rho\delta\bigr)
  +\mathcal Dq+r\rho q ,
\end{equation}
the middle summand being absent in the power case. Note that
$\mathcal Dq+r\rho q$ has $z$-degree at most one, because
$\mathcal Dq=\partial_mq-x\,\partial_uq$ and $x=z-b(u)$.

From $\mathcal Dz=\partial_mb-x\,\partial_ub=\partial_mb-\beta x$ we get
\[
 \mathcal Dz+\rho z=(\rho-\beta)\,x+W(u),\qquad W:=\partial_mb+\rho b ,
\]
with $\deg W\le1$. Now read off the coefficient of $z^{r}$ in
\eqref{eq:eigen-prime}: the factor $\partial_z\Phi_r$ has leading term
$rz^{r-1}$ and $(\rho-\beta)x=(\rho-\beta)z-(\rho-\beta)b$, so that coefficient is
$r(\rho-\beta)$, the other two summands having $z$-degree at most $r-1$. It
vanishes under \eqref{eq:eigen-rigidity} because the whole expression
does, and under \eqref{eq:low-rigidity} because $r>1$. As $r\le n<p$ in
positive characteristic, $r$ is invertible in $K$ and therefore
\[
 \beta=\rho,\qquad \mathcal Dz+\rho z=W(u),\qquad z=x+\rho u+\gamma .
\]

In the power case \eqref{eq:eigen-prime} is
$rz^{r-1}W(u)+\mathcal Dq+r\rho q$. Since $r-1\ge2$ while the last two terms
have $z$-degree at most one, the coefficient of $z^{r-1}$ must vanish in
either implication, so $W=0$.

In the Dickson case the same computation applied to $\delta$ gives
\[
 \mathcal D\delta+2\rho\delta=-(\partial_u\delta)\,z+E(u),
 \qquad E:=\partial_m\delta+b\,\partial_u\delta+2\rho\delta .
\]
Insert this and the expansion
\[
 \Dick_r(z,\delta)=z^r-r\delta z^{r-2}
       +\frac{r(r-3)}2\,\delta^2z^{r-4}+\cdots
\]
into \eqref{eq:eigen-prime}. Since $r\ge5$, the three exponents $r-1$,
$r-2$, $r-3$ are at least two, so the corresponding coefficients receive
no contribution from $\mathcal Dq+r\rho q$ and must vanish in either
implication. Taken in this order they read
\[
 r\bigl(W+\partial_u\delta\bigr)=0,\qquad -rE=0,\qquad
 r\,\delta\,\partial_u\delta=0 ,
\]
the third one after substituting $W=-\partial_u\delta$ supplied by the
first. Hence $\delta\,\partial_u\delta=0$ in the integral domain
$K[u]$, so either $\delta=0$ or $\partial_u\delta=0$, and in both cases
$\partial_u\delta=0$. Since $\deg\delta\le2<p$ in positive
characteristic this gives $\delta\in K$; and then $W=0$ and
$\mathcal D\delta+2\rho\delta=0$.

In both cases $\Phi_r(z,\delta)$ has all its coefficients in $K$ and is a
polynomial in $z-\gamma=x+\rho u$, so that $R-q(u)\in K[x+\rho u]$, and
\eqref{eq:eigen-prime} reduces to
$\mathcal DR+r\rho R=\mathcal Dq+r\rho q$. Under \eqref{eq:low-rigidity} the
outer additive term $q$ vanishes by hypothesis, so $R\in K[x+\rho u]$ and
$\mathcal DR+r\rho R=0$, as asserted. Under \eqref{eq:eigen-rigidity} we have
$\mathcal Dq+r\rho q=0$; its coefficient of $x$ is $-\partial_uq$, so
$\partial_uq=0$, and $\deg q\le r<p$ in positive characteristic forces
$q\in K$. Again $R\in K[x+\rho u]$.

\emph{Step 5: the case of composite degree.} Let $N=re$ with $e\ge3$ and
write, again by Lemma~\ref{lem:normal-components},
$R=\Phi_r(H,\delta)+q(u)$, where $H$ is monic of $x$-degree $e$, has total degree at most $e$,
and has the given type with its outer factor removed, $\deg q\le N$ and,
in the Dickson case, $\delta\in K[u]$ with $\deg\delta\le2e$. Weighted
homogeneity, now in the form $N\rho=re\,\rho$, gives
\[
 \mathcal DR+N\rho R=\partial_z\Phi_r(H,\delta)\,\bigl(\mathcal DH+e\rho H\bigr)
 +\partial_\delta\Phi_r(H,\delta)\,\bigl(\mathcal D\delta+2e\rho\delta\bigr)
 +\mathcal Dq+N\rho q .
\]
Compare $x$-degrees. The factor $\partial_z\Phi_r(H,\delta)$ has
$x$-degree $(r-1)e$ and $\partial_\delta\Phi_r(H,\delta)$ has $x$-degree
$(r-2)e$, while $\mathcal D\delta+2e\rho\delta$ and $\mathcal Dq+N\rho q$ have
$x$-degree at most one. So if $\mathcal DH+e\rho H\ne0$, the first summand
has $x$-degree at least $(r-1)e$, whereas the other two have $x$-degree
at most $(r-2)e+1$, which is smaller because $e\ge3>1$. No cancellation
is then possible and the total $x$-degree is at least $(r-1)e>1$,
contradicting either implication. Therefore $\mathcal DH+e\rho H=0$, and
\eqref{eq:eigen-rigidity} applied to $H$ gives $H\in K[x+\rho u]$.

In the Dickson case the same comparison forces
$\mathcal D\delta+2e\rho\delta=0$: otherwise the second summand alone has
$x$-degree at least $(r-2)e\ge9$, which the other two cannot cancel. Its
coefficient of $x$ is $-\partial_u\delta$, so $\partial_u\delta=0$, and
$\deg\delta\le2e<N<p$ gives $\delta\in K$. Hence
$R-q(u)=\Phi_r(H,\delta)\in K[x+\rho u]$, and one concludes exactly as in
Step 4: under \eqref{eq:low-rigidity} one has $q=0$ and the whole
expression vanishes, while under \eqref{eq:eigen-rigidity} the identity
$\mathcal Dq+N\rho q=0$ gives $\partial_uq=0$ and then $q\in K$, because
$\deg q\le N<p$. This completes the induction.

\emph{Step 6: the conclusion over $K$.} Apply \eqref{eq:low-rigidity} to
$R=\Psi$ and $N=n$, which is legitimate by \eqref{eq:scaled-shear} and
because the outer additive term of $\Psi$ vanishes. We obtain
$\mathcal D\Psi+n\rho\Psi=0$ and $\Psi=\psi(x+\rho u)$ for some $\psi\in K[z]$.
Substituting the first identity into \eqref{eq:scaled-shear} leaves
\[
 \bigl(\partial_m\lambda-\partial_uc\bigr)x+\partial_mc=0
\]
as an identity in $K[x,u]$. Its coefficients of $x^{0}$ and $x^{1}$ give
$\partial_mc=0$ and $\partial_uc=\partial_m\lambda\in K$. Setting $x=0$
in \eqref{eq:shear-family} shows $\deg c\le n$, so $\partial_uc\in K$
together with $\deg c\le n<p$ forces $c=\alpha u+\beta$ with
$\alpha,\beta\in K$. Substituting $u=y-mx$ in \eqref{eq:shear-family} we
find
\[
 F_0=a\,\psi\bigl((1-\rho m)x+\rho y\bigr)+(\lambda-\alpha m)x+\alpha y+\beta ,
\]
a polynomial in the linear form $(1-\rho m)x+\rho y$ plus an affine polynomial,
with all coefficients in $K$. That linear form is nonzero: if $\rho=0$ its
coefficient of $x$ equals $1$.

\emph{Step 7: removing the indeterminate $m$.} Step 6 produced a linear
form over $K$, not over $k$, and we now replace it by one over $k$. For
$v=(v_1,v_2)$ consider the condition that
$v_1\,\partial_xF_0+v_2\,\partial_yF_0$ be a constant, that is, that all
its coefficients on monomials of positive degree vanish. As
$F_0\in k[x,y]$, expanding $\partial_xF_0$ and $\partial_yF_0$ in the
monomial basis turns this into a homogeneous system of linear equations
in $v_1,v_2$ whose coefficients lie in $k$, one equation per monomial of
positive degree. By Step 6 the pair $(v_1,v_2)=(\rho,\rho m-1)$ solves that
system over $K$: it is nonzero, and it annihilates the linear form
$(1-\rho m)x+\rho y$, since $(1-\rho m)\rho+\rho(\rho m-1)=0$, so that
$v_1\partial_xF_0+v_2\partial_yF_0$ reduces to the corresponding
derivative of the affine part. A homogeneous linear system with
coefficients in $k$ that has a nonzero solution in an extension field of
$k$ has a nonzero solution in $k$ itself, because its rank is computed
by the same minors over either field. Hence there is
$0\ne(v_1,v_2)\in k^{2}$ with $v_1\partial_xF_0+v_2\partial_yF_0=:c_0$
constant, and $c_0\in k$.

Choose linear coordinates $(X,Y)$ over $k$ in which this direction is
$\partial_Y$. Then $\partial_Y(F_0-c_0Y)=0$, which in characteristic
zero means $F_0-c_0Y\in k[X]$, and in characteristic $p$ means
$F_0-c_0Y\in k[X,Y^{p}]$; since $\deg_YF_0\le n<p$, the two conclusions
coincide. Thus $F_0=h(X)+c_0Y$ with $h\in k[z]$, which is the assertion
with $L=X$ and $\varphi=c_0Y$. The final sentence is
Lemma~\ref{lem:cylinder}.
\end{proof}

\subsection*{The uniform bound}

It remains to count the directions of kind \textup{(A)}. The count is
effective: a canonical reconstruction of the monic components reduces it
to the roots of univariate coefficient polynomials.

\begin{lemma}\label{lem:reconstruction}
Fix a type $\tau$ of degree $n\ge3$, and let
$P=x^n+z_1x^{n-1}+\cdots+z_n$ be monic over a field of characteristic
zero or greater than $n$. There is a canonical reconstructed polynomial
$R_\tau(P)$ with the following properties.
\begin{enumerate}
\item[\textup{(i)}] It is monic of degree $n$, is of type $\tau$, and its
outer additive term is zero. Its coefficient of $x^{n-j}$ is a polynomial
in $z_1,\ldots,z_{n-2}$, obtained from a universal polynomial over
$\mathbb Z[1/n!]$ by reduction in positive characteristic, weighted homogeneous of
weight $j$ when $z_i$ is assigned weight $i$.
\item[\textup{(ii)}] If $P-\eta x$ is of type $\tau$ for some scalar
$\eta$, then $P-R_\tau(P)=\eta x+c$ for a scalar $c$.
\item[\textup{(iii)}] If $z_j\in K[u]$ and $\deg_uz_j\le j$, then
$R_\tau(P)$ has total degree at most $n$ in $(x,u)$, and its successive
monic components have total degree at most their $x$-degree.
\item[{\textup{(iv)}}] {The construction commutes with
specialisation of the coefficients. Precisely, the coefficients of
$R_\tau(P)$ are universal polynomials in $z_1,\ldots,z_{n-2}$ over
$\mathbb Z[1/n!]$ by \textup{(i)}, so if $A$ is any commutative ring in
which $n!$ is invertible, $P$ is monic over $A$ and $\sigma\colon A\to
A'$ is a ring homomorphism into a ring in which $n!$ is invertible, then
$R_\tau(P^{\sigma})=\bigl(R_\tau(P)\bigr)^{\sigma}$, where $\sigma$ is
applied coefficientwise.}
\end{enumerate}
\end{lemma}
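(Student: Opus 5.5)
We construct $R_\tau(P)$ by induction on the number of factors of $\tau=(r_1,\ldots,r_k)$, imitating the coefficient comparison in the proof of Lemma~\ref{lem:normal-components}. Write $r=r_1$ and $n=re$. The inner monic component $H=x^e+h_1x^{e-1}+\cdots+h_e$ is defined by the recursion
\[
 h_j=\frac1r\bigl(z_j-P_j(h_1,\dots,h_{j-1})\bigr),\qquad 1\le j\le e ,
\]
where $P_j$ is the universal integer polynomial from Step~2 of that proof. This makes $h_j$ a polynomial over $\mathbb Z[1/n!]$ in $z_1,\dots,z_j$, weighted homogeneous of weight $j$. For a Dickson factor ($r\ge5$) we then put
\[
 \delta=\frac1r\bigl([x^{n-2e}]H^r-z_{2e}\bigr),
\]
which has weight $2e$; for a power factor we put $\delta=0$. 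Next we replace $H$ by its reconstruction $R_{\tau'}(H)$ for the tail type $\tau'=(r_2,\dots,r_k)$, with $R_{()}(H)=H$ when $e=1$. Finally we set $R_\tau(P)=\Phi_r(R_{\tau'}(H),\delta)$. Here we need $2e\le n-2$ and $e\le n-2$, which holds because $r\ge3$ gives $n-2e\ge e\ge1$, and $n-2e\ge 2e$ when $r\ge5$. Since $R_{\tau'}(H)$ uses only $h_1,\dots,h_{e-2}$ by induction, only $z_1,\dots,z_{n-2}$ enter.

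For (i): monicity, type $\tau$ and the vanishing outer additive term hold by construction. Weighted homogeneity follows because $\Phi_r(z,\delta)$ is weighted homogeneous for weights $1$ and $2$, composition preserves weights when each $h_i$ has weight $i$, and the induction supplies the same property for $R_{\tau'}(H)$. Coefficients lie in $\mathbb Z[1/n!]$ because we only divide by primes $r_i\le n$. Part (iv) is then immediate, since everything is given by universal polynomials. Part (iii) follows from (i): a weight-$j$ polynomial in $z_i$ with $\deg_u z_i\le i$ has $u$-degree at most $j$. The same weight argument applied to $H$, to $\delta$ and recursively to the inner components bounds the total degrees of the monic components.

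The main point is (ii). Suppose $P-\eta x$ has type $\tau$. By Lemma~\ref{lem:normal-components}, applied with $u$ absent, or simply by Steps~1--3 of its proof over the base field, we can write $P-\eta x=\Phi_r(H^*,\delta^*)+q$. Here $H^*$ is monic of degree $e$ and has type $\tau'$. Comparing the coefficients of $x^{n-j}$ for $j\le e$ shows $H^*=H$, because $\eta x$ and $q$ do not reach these degrees: $n-e\ge2$. Comparing the coefficient of $x^{n-2e}$ shows $\delta^*=\delta$, because $n-2e\ge e\ge 1$, and when $e=1$ we have $n-2e=r-2\ge3>1$. The one exception is $n-2e=1$, which would need $r=3$, and in that case no Dickson factor occurs. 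By induction, (ii) applied to $H$ with $\eta=0$ gives $H-R_{\tau'}(H)=c'$, a constant. It remains to absorb $c'$. Since $H^*=H$ itself has type $\tau'$ with vanishing... here we must be careful: the inner component of the normalised decomposition is only determined up to an additive constant. So we should normalise $H$ to have zero constant term, or rather define $R_\tau$ so that the outer additive term is zero. Once $H=R_{\tau'}(H)+c'$, we use the recursion for the inner polynomials to show that the inner reconstruction is exact up to a constant absorbed in the affine map $\alpha_1$. Then $\Phi_r(H,\delta)$ and $\Phi_r(R_{\tau'}(H),\delta)$ are two polynomials of type $\tau$ whose difference must be accounted for. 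This is the obstacle.

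I would first try to avoid the issue by defining the recursion on $H$ directly. We set $R_\tau(P)=\Phi_r(\widetilde H,\delta)$, where $\widetilde H$ is $R_{\tau'}(H)$ with its constant term replaced by $h_e$, which the outer comparison already determines. The inductive claim is then \emph{$R_{\tau'}(H)=H$ whenever $H$ has type $\tau'$}, after the constant is matched. With that, $P-\eta x-R_\tau(P)=q$ is a scalar $c$, which gives (ii). Checking that this constant-fixing keeps (i), in particular weight homogeneity and the dependence on $z_1,\ldots,z_{n-2}$ only, is the delicate bookkeeping. It works because $h_e$ has weight $e\le n-2$.
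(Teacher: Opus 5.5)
Your final construction is the paper's, and so is the rest of the argument. Your $\widetilde H$, namely $R_{\tau'}(H)$ with its constant term replaced by $h_e=H(0)$, is exactly the paper's $\widehat H=R_{\tau'}(H)+H(0)-R_{\tau'}(H)(0)$. Your proof of (ii) is also the paper's: normalise the actual outer decomposition of $P-\eta x$, identify $H$ and $\delta$ from the coefficients of $x^{n-1},\dots,x^{n-e}$ and $x^{n-2e}$, then apply (ii) inductively to $H$ with $\eta=0$. The weight arguments for (i), (iii) and (iv) match as well.

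One correction to your diagnosis of the obstacle. $H$ is not determined only up to an additive constant: the outer comparison determines it exactly, including $h_e$. The constant discrepancy arises because $R_{\tau'}(H)$ has zero outer additive term, and restoring $H(0)$ repairs exactly that.
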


\begin{proof}
Let $r$ be the outer degree, $n=re$. Reconstruct
$H=x^e+h_1x^{e-1}+\cdots+h_e$ by requiring $H^r$ and $P$ to have the same
coefficients of $x^{n-1},\ldots,x^{n-e}$. The successive equations have
the form $rh_j=z_j-\text{a polynomial in the preceding }h_i$. Thus $h_j$
is weighted homogeneous of weight $j$ in $z_1,\ldots,z_j$. For a Dickson outer
factor put
\[
 \delta=\frac{[x^{n-2e}]H^r-z_{2e}}{r} .
\]
It is weighted homogeneous of weight $2e$. This formula is used only for $r\ge5$.

If $e=1$, put $\widehat H=H$. Otherwise, let $\tau'$ be the remaining
type and put
\[
 \widehat H=R_{\tau'}(H)+H(0)-R_{\tau'}(H)(0).
\]
Define $R_\tau(P)=\Phi_r(\widehat H,\delta)$, omitting $\delta$ in the
power case. This is a polynomial of the stated type for every input $P$.
Induction shows that its coefficient of $x^{n-j}$ is weighted homogeneous of
weight $j$. All
divisions are by outer prime degrees. Moreover, $e\le n-2$, and
$2e\le n-2$ whenever the parameter is used; therefore the construction is
independent of $z_{n-1}$ and $z_n$.

If $P-\eta x$ has type $\tau$, normalise its actual outer decomposition
as in Lemma~\ref{lem:normal-components}. The reconstructed $H$ and
$\delta$ are its actual monic inner component and parameter, since
neither $\eta x$ nor the outer additive constant affects the coefficients
used. The inner component has type $\tau'$. Applying (ii) inductively
with $\eta=0$, its reconstruction, with its constant coefficient
restored, is exactly $H$.
Thus $\widehat H=H$ and (ii) follows. This argument also applies when the
initial decomposition is given over an algebraic extension. Finally the
weight bounds give (iii), including the bounds on every intermediate
component and parameter.
\end{proof}

For $D\ge3$ define
\begin{equation}\label{eq:BD}
 B(D)=\begin{cases}
 0,&D\text{ even},\\
 t(D)(D-1)^2+1,&D\text{ odd}.
 \end{cases}
\end{equation}

\begin{theorem}\label{thm:uniform-bad}
Let $D\ge3$ and $p>D^4$, and let $f\in\Fp[x,y]$ have total degree
$D$ and non-cylindrical graph. Then
\[
 |\BB_{\rm ar}(f)|\le B(D)
 \le D^{3/2}(D-1)^2+1.
\]
\end{theorem}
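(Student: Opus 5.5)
The plan is to work with all directions at once, as a one-parameter family. By a coordinate change we may assume the direction $(0:1)$ is not of kind (A), or we simply set it aside; it accounts for the final ``$+1$'' in $B(D)$. Every other direction is $v=(1:m)$, with lines $u=y-mx$ const, parametrised by $x$. Put $K=\Fp(m)$ and
\[
 P:=\frac{f(x,u+mx)}{g(1,m)}=x^D+z_1x^{D-1}+\cdots+z_D\in K[u][x],
\]
which is monic because the coefficient of $x^D$ in $f(x,u+mx)$ is $g(1,m)$. Each $z_j$ has $\deg_u z_j\le j$ and has the form (polynomial in $m$ of degree $\le D$)$/g(1,m)$. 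For $D$ even, Remark~\ref{rem:degrees} already gives $\BB_{\rm ar}(f)=\varnothing$, so assume $D$ odd. For each of the $t(D)$ types $\tau$ of degree $D$ form $Q_\tau:=P-R_\tau(P)$ using Lemma~\ref{lem:reconstruction}. By (i) and (iii), the coefficient of $x^{D-j}$ in $Q_\tau$ is a polynomial in $u$ of degree $\le j$, weighted homogeneous in the $z_i$.

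\emph{Step 1: specialisation at a bad direction.} Let $m_0\in\Fp$ be a direction of kind (A) with omitted value $\lambda$. By Proposition~\ref{prop:structure} and Lemma~\ref{lem:types}, each restriction $P(m_0,u_0,x)-(\lambda/g(1,m_0))x$ is, up to the affine change in $x$ and an additive constant, exceptional of some type $\tau(u_0)$. Translation in $x$ preserves types, so $P(m_0,u_0,\cdot)-\eta x$ itself has type $\tau(u_0)$, with $\eta=\lambda/g(1,m_0)$ independent of $u_0$. By pigeonhole one type $\tau$ occurs for at least $p/t(D)>D$ values $u_0$ (using $p>D^4\ge D\,t(D)$ from $t(D)\le D^{3/2}$). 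By (ii) and the specialisation property (iv), for these $u_0$ we get $Q_\tau(m_0,u_0,x)=\eta x+c$. The coefficients of $x^{j}$ ($j\ge2$) in $Q_\tau(m_0,u,x)$ are polynomials in $u$ of degree $\le D$ with more than $D$ roots, so they vanish identically. The coefficient of $x$ is a polynomial of degree $\le D-1$ in $u$ taking the value $\eta$ at more than $D$ points, so it equals $\eta$. Hence $m_0$ is a common root of the finitely many rational functions in $m$ given by the coefficients of $x^ju^i$ in $Q_\tau$ with $j\ge2$, and of $x u^i$ with $i\ge1$.

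\emph{Step 2: the generic case is rigid.} Suppose that for some $\tau$ all these rational functions vanish identically in $m$. Then over $K$ we have $f(x,u+mx)=g(1,m)R_\tau(P)+\lambda' x+c(u)$ with $\lambda'\in K$. Here $\Psi=R_\tau(P)$ is monic of type $\tau$, has zero outer additive term and total degree $\le D$ by Lemma~\ref{lem:reconstruction}(i),(iii). Lemma~\ref{lem:shear-rigidity} (with $k=\Fp$, $n=D<p$) then makes the graph a cylinder, a contradiction. So for each $\tau$ some coefficient $C_\tau(m)$ is a nonzero rational function, and every bad $m_0$ (of type $\tau$ as in Step 1) is a root of its numerator. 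Note that $g(1,m_0)\neq0$ for kind (A), so clearing denominators loses nothing.

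\emph{Step 3: the degree count (main obstacle).} It remains to show that the numerator of some nonvanishing $C_\tau$ has degree at most $(D-1)^2$ in $m$. Summing over $\tau$ then gives $t(D)(D-1)^2$ finite bad directions, plus possibly $(0:1)$, which yields $B(D)$; the bound $t(D)\le D^{3/2}$ gives the last inequality. A naive estimate, with weight-$j$ monomials in the $z_i$ having denominator $g(1,m)^{j}$ and numerator degree $\le jD$, is too weak. I would first try to exploit the bihomogeneous structure. Write $f(x,u+mx)$ via $y$-homogenisation so that $z_i\,g(1,m)$ is a polynomial in $m$ of degree $\le D$ that interacts with the weight in $u$. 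Since $\deg_uz_i\le i$, the coefficient of $u^{i}$ in $z_i$ has controlled $m$-degree. Then I would choose $C_\tau$ as the coefficient of $x^{D-j}u^{i}$ with $j$ minimal, as small as $j=2$ or $3$, where the cleared numerator involves only $g(1,m)^{j-1}$ and bounded-degree factors. For the minimal nonvanishing weight $j\le D-1$, this should give degree $\le (j-1)D+\ldots\le(D-1)^2$. If this direct count fails, the fallback is to bound the number of common roots via the weight structure: pass to the homogeneous variable $(v_1:v_2)$, where all coefficients become forms of degree $\le(D-1)^2$ in $v$.
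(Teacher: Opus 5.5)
\textbf{There is a gap: Step~3 is left unproven, and it is where the bound $B(D)$ comes from.} Your Steps~1 and~2 follow the paper's argument: pigeonhole over the $t(D)$ types, Lemma~\ref{lem:reconstruction}(ii),(iv) at the specialised direction, interpolation in $u$, and Lemma~\ref{lem:shear-rigidity} to rule out identical vanishing. Step~3, however, is only a plan. The only input you actually record is that $g(1,m)z_i$ is a polynomial in $m$ of degree $\le D$. With that, the cleared coefficient of $x^{D-j}$ has $m$-degree $\le jD$. For the coefficient of $x$ ($j=D-1$) this allows $D(D-1)$ roots per type, so the total is $t(D)D(D-1)+1$. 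That exceeds $B(D)$ and does not give $D^{3/2}(D-1)^2+1$.

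Your proposed repair, taking the minimal $j$ with a nonvanishing coefficient and hoping it is $2$ or $3$, is unjustified. Nothing prevents every coefficient of $x^{D-j}$ with $j\le D-2$ from vanishing identically, with the only failure being that the coefficient of $x$ depends on $u$. In that case the only available equation has $j=D-1$. The ``fallback'' to forms in $(v_1:v_2)$ of degree $\le(D-1)^2$ simply restates what has to be proved.

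\textbf{The missing idea is a sharper bidegree bound on the numerators.} Write $g(1,m)z_j=B_j(m,u)$. A monomial $x^ay^b$ of $f$ contributes $\binom bk m^ku^{b-k}x^{a+k}$ to $f(x,u+mx)$. The condition $a+k=D-j$ forces $k\le D-j$ and $b-k\le j$. Hence $\deg_mB_j\le D-j$ and $\deg_uB_j\le j$, which is much stronger than $\deg_mB_j\le D$.

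Now combine this with the weighted homogeneity in Lemma~\ref{lem:reconstruction}(i). Take a monomial $z_{i_1}\cdots z_{i_h}$ of weight $i_1+\cdots+i_h=j$, so $h\le j$. Multiplied by $g(1,m)^j$ it becomes $g(1,m)^{j-h}B_{i_1}\cdots B_{i_h}$, of $m$-degree at most $(j-h)D+(hD-j)=j(D-1)$.

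So every coefficient of $x^{D-j}u^i$ in $g(1,m)^j\bigl(P-R_\tau(P)\bigr)$ is a polynomial in $m$ of degree $\le j(D-1)\le(D-1)^2$. This holds uniformly for $1\le j\le D-1$, so it covers both the vanishing conditions for the coefficients of $x^{\ge2}$ and the $u$-independence of the coefficient of $x$; no choice of a minimal $j$ is needed. Since every direction of kind (A) has $g(1,m_0)\ne0$, each such $m_0$ is a root of one fixed nonzero polynomial of this kind for its type. That gives $(D-1)^2$ directions per type, and adding the direction $(0:1)$ gives $B(D)$.
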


\begin{proof}
The even case is Remark~\ref{rem:degrees}, so assume $D$ odd. Put
\[
 a(m)=f_D(1,m),\qquad
 P_m(x,u)=\frac{f(x,u+mx)}{a(m)}=x^D+\sum_{j=1}^{D}z_j(m,u)x^{D-j}.
\]
We work on $a(m)\ne0$: every direction $v=(1:m)$ of kind
\textup{(A)} satisfies this condition by Theorem~\ref{thm:trichotomy}. Write
$z_j=B_j(m,u)/a(m)$. Expanding each monomial
of $f$ shows
\begin{equation}\label{eq:coefficient-bidegrees}
 \deg_uB_j\le j,\qquad \deg_mB_j\le D-j,\qquad \deg_ma\le D .
\end{equation}
For each type $\tau$, set $E_\tau=P_m-R_\tau(P_m)$. The identities
required for a pencil are: all coefficients of $x^i$ in $E_\tau$ vanish
for $i\ge2$, and the coefficient of $x$ is independent of $u$. The
constant coefficient is unrestricted. The coefficient of $x^{D-j}$ in
$E_\tau$ has $u$-degree at most $j$.

{We shall pass freely between the generic $P_m$ and its
specialisations $P_{m_0}(x,u_0)$, which is legitimate by
Lemma~\ref{lem:reconstruction}\textup{(iv)}: $D!$ is invertible in
$\Fp$ because $p>D^4>D$, and evaluation at $m=m_0$, $u=u_0$ is a ring
homomorphism defined on the localization $\Fp[m,u,a(m)^{-1}]$.
Consequently $E_\tau$ specialises to the corresponding expression formed
from $P_{m_0}(x,u_0)$.}

Fix a direction of kind \textup{(A)}, of slope $m_0$, and a missing value
$\lambda$. For every $u_0\in\Fp$, the polynomial
$P_{m_0}(x,u_0)-\lambda x/a(m_0)$ is exceptional by \textup{(W)} and has
at least one of the $t(D)$ types. Choose one such type for each $u_0$.
Since $p>D^4>(D-1)t(D)$, one type occurs for more than $D-1$ values of
$u_0$. By Lemma~\ref{lem:reconstruction}(ii), at those values every
coefficient of $x^i$, $i\ge2$, in $E_\tau$ is zero, and its coefficient
of $x$ is $\lambda/a(m_0)$. Each of these polynomial identities has
$u$-degree at most $D-1$, so interpolation makes it an identity in $u$.
Thus $m_0$ satisfies all required identities for one type.

We now count the possible $m_0$. For $1\le j\le D-1$, multiply the
coefficient of $x^{D-j}$ in $E_\tau$ by $a(m)^j$.
Lemma~\ref{lem:reconstruction}(i) and \eqref{eq:coefficient-bidegrees}
show that this is a polynomial in $m,u$ of $m$-degree at most $j(D-1)$.
Indeed a monomial $z_{i_1}\cdots z_{i_h}$ of weight $i_1+\cdots+i_h=j$
has numerator, over $a^j$, $a^{j-h}B_{i_1}\cdots B_{i_h}$, of degree at
most $(j-h)D+hD-j=j(D-1)$. The same calculation applies to $z_j$ itself.
After taking coefficients in $u$, all required identities are therefore
polynomial equations in $m$ of degree at most $(D-1)^2$.

For any fixed type these equations cannot all be identically zero.
Otherwise over $K=\Fp(m)$ one would have
\[
 f(x,u+mx)=a(m)R_\tau(P_m)(x,u)+\lambda(m)x+c(m,u),
\]
with $\lambda(m)\in K$ and $c(m,u)\in K[u]$.
Lemma~\ref{lem:reconstruction}(iii) supplies all the degree conditions
required by Lemma~\ref{lem:shear-rigidity}, which applies with $k=\Fp$
and $K=\Fp(m)$. The graph would be a cylinder, a contradiction. Hence one nonzero
equation bounds the directions of each type by $(D-1)^2$. Summing over
the types and adding the direction outside the chart $v=(1:m)$ proves the
result.
\end{proof}

\section{Bounds for omitted values}\label{sec:size}

We retain the notation of Sections~\ref{sec:budget}--\ref{sec:pivot}.
A nonconstant polynomial $A\in k[x_1,\ldots,x_n]$ is
\emph{geometrically non-composite} if it cannot be written as $B(C)$
over $\bar k$, with $B$ univariate of degree at least two and $C$ a
polynomial in $x_1,\ldots,x_n$.

\begin{lemma}\label{lem:quotient-noncomposite}
Let $k$ have characteristic zero or characteristic $p>D$, and let
$f(u,s)\in k[u,s]$ have total degree at most $D$ and $s$-degree
$r\ge2$. Then
\[
 A(u,s,t)=\frac{f(u,s+t)-f(u,s)}t
\]
is geometrically non-composite. In particular, for the polynomials
considered here, $r(v)\ge2$ implies that $A_v$ is geometrically
non-composite.
\end{lemma}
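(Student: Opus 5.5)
Suppose, for a contradiction, that $A=B(C)$ over $\bar k$ with $\deg B=b\ge2$ and $C\in\bar k[u,s,t]$ nonconstant. The plan is to specialise $A$ at $t=0$ and at $s=0$ and compare the resulting decompositions. We may take $B$ monic, and we may absorb constants so that $C(0,0,0)=0$ or so that $B$ has no term of degree $b-1$. Write $f=\sum_{k\le r}c_k(u)s^k$ with $c_r\ne0$.

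First consider the $t$-structure. The coefficient of $t^{k-1}$ in $A$ is $\partial_s^kf/k!$, so $\deg_tA=r-1$, and the top $t$-coefficient $c_r(u)$ does not depend on $s$. Taking degrees in $t$ gives $r-1=b\deg_tC$, and the leading $t$-coefficients satisfy $c_r(u)=\bigl(\mathrm{lc}_tC\bigr)^b$. Note that $\deg_tC\ge1$ is forced: if $C$ were free of $t$, then $A$ would be free of $t$, so $r=1$, which is excluded.

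The main step is to use the symmetry of the difference quotient. Substituting $(s,t)\mapsto(s+t,-t)$ gives $A(u,s+t,-t)=A(u,s,t)$, so $B(C(u,s+t,-t))=B(C(u,s,t))$. Over $\bar k(u,s,t)$, the relation $B(X)=B(Y)$ together with the uniqueness of the right composition factor of given degree (Ritt's first theorem, valid since $\deg A<p$) should give $C(u,s+t,-t)=\zeta C+\eta$ with $\zeta^b=1$ and $\eta$ in the appropriate constant field. Next, specialise at $t=0$: this gives $A(u,s,0)=\partial_sf=B(C(u,s,0))$, a composite of $s$-degree $r-1$. A second relation comes from the extreme coefficient in $s$: the coefficient of $s^{r-1}$ in $A$ is $rc_r(u)$, independent of $t$. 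I would compare the top homogeneous parts in the variables $(s,t)$, where $A$ has top part $c_r(u)\bigl((s+t)^r-s^r\bigr)/t$. This binary form in $(s,t)$ has $r-1$ distinct linear factors, namely $s+t=\omega s$ for $\omega^r=1$, $\omega\ne1$; distinctness uses $p\nmid r$. On the other hand, $B(C)$ has top part $C_{\rm top}^b$, a perfect $b$-th power in $(s,t)$ over $\bar k(u)$. A squarefree binary form of degree $r-1\ge1$ cannot be a $b$-th power with $b\ge2$, which gives the contradiction.

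The obstacle is justifying the top-part comparison. The top $(s,t)$-homogeneous part of $B(C)$ is $\mathrm{lc}(B)\cdot(\text{top part of }C)^b$, which holds because products of top parts are nonzero in the domain $\bar k(u)[s,t]$. Since $\deg_{(s,t)}A=r-1$ with the top part above, this step is in fact routine. The only care needed is that $u$ is treated as a coefficient, so the grading is over $\bar k[u]$ and domain arguments apply. This makes the Ritt-type symmetry argument unnecessary, and I would present only the grading argument. The final sentence of the statement then follows by applying the result in coordinates adapted to $v$, where $r(v)=\deg_sf\ge2$.
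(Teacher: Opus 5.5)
Your final argument is correct and is the paper's proof: you compare top $(s,t)$-homogeneous parts over $\bar k[u]$. There $A$ has the squarefree part $c_r(u)\bigl((s+t)^r-s^r\bigr)/t$, whose linear factors are distinct because $p\nmid r$, while $B(C)$ has $\mathrm{lc}(B)\,C_{\rm top}^b$ with $C_{\rm top}$ of positive degree. The Ritt-type symmetry detour, which you rightly discard, is unnecessary, and the preliminary $t$-degree remarks are needed only to ensure $\deg C_{\rm top}\ge1$.
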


\begin{proof}
Write $f(u,s)=\sum_{j=0}^r c_j(u)s^j$, where $c_r\ne0$.
Regard $A$ as a polynomial in $s,t$ over $\bar k(u)$.
Its homogeneous part of degree $r-1$ in these two variables is
\[
 c_r(u)\frac{(s+t)^r-s^r}{t}
 =c_r(u)\prod_{\substack{\zeta^r=1\\\zeta\ne1}}
       \bigl(t+(1-\zeta)s\bigr).
\]
The factors are distinct because the characteristic does not divide
$r$. If $A=B(C)$ with $\deg B=e\ge2$, then $C$ has positive
degree $d$ in $s,t$, and the highest homogeneous part of $A$ would
be $b_eC_d^e$, where $b_e$ is the leading coefficient of $B$.
This contradicts the absence of repeated factors in the displayed
form. Thus no such decomposition exists, even over $\bar k(u)$.
\end{proof}

For a geometrically non-composite polynomial $Q$ in two variables,
the Stein--Lorenzini inequality gives
\[
 |\spec(Q)|\le\deg Q-1,
 \qquad
 \spec(Q)=\{\lambda\in\Fpbar:Q-\lambda
       \text{ is reducible over }\Fpbar\}.
\]
The inequality holds in arbitrary characteristic; see
\cite{Stein,Lor,BDN}. It extends to more than two variables by
\cite{Najib}, although only the two-variable case is needed here.

\begin{proposition}\label{prop:small-pencil}
Let $3\le D<p$, let $f\in\Fp[x,y]$ have total degree $D$, and assume
$p>D^4$. If $r(v)\ge2$, then
\[
 \mu(v)\le r(v)-2\le D-2.
\]
\end{proposition}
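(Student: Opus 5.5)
Work in coordinates $(u,s)$ adapted to $v$, so that $A:=A_v(u,s,t)=\bigl(f(u,s+t)-f(u,s)\bigr)/t$ is a polynomial in three variables of degree at most $D-1$ and $t$-degree $r-1$, with $r=r(v)\ge2$. If $r=2$ we already know $\mu(v)=0$ by Theorem~\ref{thm:trichotomy} (no degree-two polynomial permutes $\Fp$ on the lines where the degree does not drop), so assume $r\ge3$. The plan is to show that each omitted value $\lambda$ forces $A-\lambda$ to be reducible over $\Fpbar$ in a suitable two-variable slice, and then to count such $\lambda$ using the Stein--Lorenzini inequality together with Lemma~\ref{lem:quotient-noncomposite}.

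\textbf{Step 1: reduce to two variables.} Fix a line, that is a value $u_0\in\Fp$ with $c_r(u_0)\ne0$; at most $D-r<p$ values of $u$ are excluded, so such $u_0$ exists. Consider $Q(s,t):=A(u_0,s,t)$, which has degree $r-1$ in $(s,t)$, and whose top form is $c_r(u_0)\prod_{\zeta^r=1,\zeta\ne1}(t+(1-\zeta)s)$. The argument of Lemma~\ref{lem:quotient-noncomposite}, applied to the restriction $f(u_0,\cdot)$ of degree $r$, shows that $Q$ is geometrically non-composite. The Stein--Lorenzini inequality therefore gives $|\spec(Q)|\le r-2$.

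\textbf{Step 2: an omitted value lies in the spectrum.} Let $\lambda\notin\Val(A_v)$. By Proposition~\ref{prop:perm}, $\psi=f(u_0,s)-\lambda s$ permutes $\Fp$. Since $\deg\psi=r\le D$ and $p>D^4$, (W) makes $\psi$ exceptional, so $\Phi(s_1,s_2)=(\psi(s_1)-\psi(s_2))/(s_1-s_2)$ has no absolutely irreducible factor over $\Fp$. In the variables $s_1=s+t$, $s_2=s$ one has $\Phi=Q-\lambda$. A polynomial over $\Fp$ of positive degree with no absolutely irreducible $\Fp$-factor is reducible over $\Fpbar$: were it absolutely irreducible, it would itself be such a factor. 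Since $r-1\ge2$, this gives $\lambda\in\spec(Q)$. Hence $\Fp\setminus\Val(A_v)\subseteq\spec(Q)$, and $\mu(v)\le r-2\le D-2$.

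\textbf{Main obstacle.} The delicate point is Step 1: we must make sure that Lemma~\ref{lem:quotient-noncomposite} really applies to the specialised two-variable $Q$, and not merely to $A$ over $\bar k(u)$. Its proof only uses the top homogeneous part in $(s,t)$, whose factors are distinct because $p\nmid r$, so specialising $u=u_0$ with $c_r(u_0)\ne0$ preserves the argument. The other point to check is that the version of Stein--Lorenzini quoted applies with the degree of $Q$ equal to $r-1$, so that the bound reads $r-2$, and that it counts $\lambda\in\Fpbar$, which contains our $\lambda\in\Fp$. Note also that Step 1 uses no genericity in $u$ beyond $c_r(u_0)\ne0$, and Step 2 uses the permutation property on only that single line.
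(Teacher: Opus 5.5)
Your proposal is correct and is essentially the paper's proof. It uses the same line $u_0$ with $c_r(u_0)\ne0$, the same $Q$, Lemma~\ref{lem:quotient-noncomposite} together with Stein--Lorenzini to get $|\spec(Q)|\le r-2$, and the same Weil-bound reason why an omitted $\lambda$ must lie in $\spec(Q)$. The only difference is packaging: you obtain that last step by quoting \textup{(W)} through the substitution $s_1=s+t$, $s_2=s$, while the paper reruns the Weil count on $Q-\lambda$ directly, discarding the at most $d$ points with $t=0$. Note also that your Step~2 already covers $r=2$, because a linear $Q-\lambda$ is absolutely irreducible; so the detour through Theorem~\ref{thm:trichotomy}, whose non-cylinder hypothesis is not assumed here, is unnecessary.
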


\begin{proof}
In coordinates adapted to $v$, write
$f(u,s)=\sum_{j=0}^r c_j(u)s^j$, where $r=r(v)$ and $c_r\ne0$.
Since $\deg c_r\le D-r<p$, there is $u_0\in\Fp$ with
$c_r(u_0)\ne0$. Put $h(s)=f(u_0,s)$ and
\[
 Q(s,t)=\frac{h(s+t)-h(s)}t.
\]
Then $\deg Q=r-1$, and Lemma~\ref{lem:quotient-noncomposite}, applied
to $h$, shows that $Q$ is geometrically non-composite. Every value
omitted by $A_v$ is also omitted by $Q$ on $\Fp\times\Fp^*$.

Suppose that $Q-\lambda$ is absolutely irreducible, and put $d=r-1$.
Its projective closure has degree $d$. The Weil bound for possibly
singular plane curves \cite{AP}, after removing the points at infinity,
gives at least $p+1-(d-1)(d-2)\sqrt p-2d$ affine rational points.
Moreover, $Q(s,0)=h'(s)$ is nonconstant, since $2\le r<p$.
Thus $Q(s,0)-\lambda$ has at most $d$ roots. There are at least
\[
 p+1-(d-1)(d-2)\sqrt p-3d
\]
points on $Q-\lambda=0$ with $t\ne0$. This number is positive for
$p>D^4$: writing $z=\sqrt p>D^2$ and using $d\le D-1$, we have
\[
 z\bigl(z-(d-1)(d-2)\bigr)
 >D^2\bigl(D^2-(D-2)(D-3)\bigr)
 =D^2(5D-6)>3(D-1).
\]
Consequently every omitted value lies in $\spec(Q)$. The
Stein--Lorenzini inequality gives
$\mu(v)\le|\spec(Q)|\le r-2$.
\end{proof}

\section{Proof of the main result}\label{sec:sum}

\begin{lemma}\label{lem:lowdegree}
Let $p$ be odd and let $S$ have a graph representation $f$ with
$\deg f\le2$. Then either $S$ is a cylinder or $\NS_S=\{m_0\}$.
\end{lemma}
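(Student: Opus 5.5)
The plan is to reduce to the count $|\NS_S|=1+\sum_v\mu(v)$ from Proposition~\ref{prop:reduction}. It then suffices to show that, unless $S$ is a cylinder, $\mu(v)=0$ for every $v\in\PP^1(\Fp)$. The constant case is already excluded, since constant graphs are planes and hence cylinders. So I assume $1\le\deg f\le2$ and that $S$ is not a cylinder, which gives $\partial_vf\ne0$ for every $v$ by Lemma~\ref{lem:cylinder}. Since $p$ is odd, $D\le 2<p$, and the degree discussion of Section~\ref{sec:pivot} applies, giving $r(v)\in\{1,2\}$ for each direction.

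\emph{Case $r(v)=2$.} In adapted coordinates, $f=c_2s^2+c_1(u)s+c_0(u)$, where $\deg c_2\le D-2\le0$, so $c_2$ is a nonzero constant. Then, for every $\lambda$, each restriction $\psi_{\ell,\lambda}$ is a quadratic polynomial over $\Fp$ with $p$ odd, hence not a permutation. Proposition~\ref{prop:perm} then shows that no $\lambda$ is omitted, so $\mu(v)=0$. Alternatively, I could argue directly: $A_v=c_2(2s+t)+c_1(u)$ is affine in $s$ with nonzero coefficient $2c_2$, so it is surjective.

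\emph{Case $r(v)=1$.} Here $f=c_1(u)s+c_0(u)$ with $\deg c_1\le1$ and $A_v=c_1(u)$. If $c_1$ is constant, then $A_v$ is constant and $S$ is a cylinder by Lemma~\ref{lem:cylinder}, which is excluded. Otherwise $c_1$ is a nonconstant affine function, hence a bijection of $\Fp$, so $\Val(A_v)=\Fp$ and again $\mu(v)=0$. This is exactly Proposition~\ref{prop:affine-restrictions} with $e=1$, which gives $|\NS_S|=1+p-|c_1(\Fp)|=1$. I would either quote that proposition or repeat this one-line argument.

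Combining the two cases gives $\sum_v\mu(v)=0$, so $\NS_S=\{m_0\}$. I do not expect any real obstacle here. The only point needing care is that $t$ ranges over $\Fp^*$, which matters only in the $r(v)=2$ case; it is handled by fixing any $t\ne0$ and varying $s$, which is possible because $2c_2\ne0$ in odd characteristic.
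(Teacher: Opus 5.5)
Your proof is correct and follows essentially the paper's route: each $A_v$ is shown to be either constant, which forces a cylinder by Lemma~\ref{lem:cylinder}, or surjective, and Proposition~\ref{prop:reduction} then gives $\NS_S=\{m_0\}$. The paper does this in one step via the explicit formula $A_v=\partial_vq(P)+q(v)t+\partial_v\ell$, splitting on whether $\partial_vq=0$; your split into $r(v)=2$ and $r(v)=1$ is the same argument written in adapted coordinates.
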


\begin{proof}
If $\deg f\le1$, its graph is a plane and hence a cylinder. Suppose that
$\deg f=2$ and write $f=q+\ell+c$, with $q$ homogeneous of degree two and
$\ell$ linear. Then
\[
 A_v(P,t)=\partial_vq(P)+q(v)t+\partial_v\ell .
\]
If the linear form $\partial_vq(P)$ is nonzero, then $A_v$ is surjective
as $P$ varies. If $\partial_vq=0$, then also $q(v)=0$, so
$A_v=\partial_v\ell$ is constant and $S$ is a cylinder by
Lemma~\ref{lem:cylinder}. Thus every pencil is surjective unless $S$ is a
cylinder, and the claim follows from Proposition~\ref{prop:reduction}.
\end{proof}

Put
\begin{equation}\label{eq:CD}
 C(D)=1+(D+B(D))(D-2),
\end{equation}
where $B(D)$ is defined in \eqref{eq:BD}.

\begin{theorem}\label{thm:quant}
Let $f\in\Fp[x,y]$ have actual degree $D\ge3$ and non-cylindrical
graph $S$, and assume $p>D^4$. If $r(v)\ge2$ for every direction $v$,
then $|\NS_S|\le C(D)$. Otherwise, in coordinates adapted to the
unique direction with $r(v)=1$, we have
$f(u,s)=c_1(u)s+c_0(u)$ and
\[
 |\NS_S|=1+p-|c_1(\Fp)|
 \le p+1-\left\lceil\frac p{D-1}\right\rceil.
\]
In particular, for $p>D^{9/2}$ every such graph satisfies
\begin{equation}\label{eq:quant-bound}
 |\NS_S|\le p+1-\left\lceil\frac p{D-1}\right\rceil.
\end{equation}
\end{theorem}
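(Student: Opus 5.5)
Write $\BB(f)=\{v:\mu(v)>0\}$ as in Theorem~\ref{thm:trichotomy}. The plan splits into two cases according to whether some direction has $r(v)=1$.

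\emph{Case with an affine direction.} If some direction $v$ has $r(v)=1$, it is unique by Lemma~\ref{lem:unique-degenerate}, since $\deg f\ge3$ (equivalently, by case (L) of Theorem~\ref{thm:trichotomy}). Proposition~\ref{prop:affine-restrictions} then gives directly $f=c_1(u)s+c_0(u)$, the equality $|\NS_S|=1+p-|c_1(\Fp)|$, and the bound $p+1-\lceil p/(D-1)\rceil$. This case needs nothing beyond quoting that result, and it holds for every $p>D$.

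\emph{Case $r(v)\ge2$ for all $v$.} By Proposition~\ref{prop:reduction}, $|\NS_S|=1+\sum_{v\in\BB(f)}\mu(v)$. Proposition~\ref{prop:small-pencil} applies to every $v$ (we have $p>D^4$ and $r(v)\ge2$) and gives $\mu(v)\le D-2$. Theorem~\ref{thm:trichotomy} gives $|\BB(f)|\le D+|\BB_{\rm ar}(f)|$, and Theorem~\ref{thm:uniform-bad} gives $|\BB_{\rm ar}(f)|\le B(D)$. Together these yield
\[
 |\NS_S|\le 1+(D+B(D))(D-2)=C(D).
\]
In particular no case (L) direction occurs here, which is consistent with the hypothesis.

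\emph{Deriving \eqref{eq:quant-bound} for $p>D^{9/2}$.} The affine case already gives it. In the other case we must show $C(D)\le p+1-\lceil p/(D-1)\rceil$. Since $\lceil p/(D-1)\rceil\le p/(D-1)+1$, it suffices to prove $C(D)\le p(D-2)/(D-1)$, i.e.\ $(D-1)C(D)\le p(D-2)$. For $D$ even, $B(D)=0$ and $C(D)=1+D(D-2)$, which is far below $p(D-2)/(D-1)\ge p/2$; this even holds for $p>D^4$. For $D$ odd, use $B(D)\le D^{3/2}(D-1)^2+1$ to get
\[
 C(D)\le 1+(D+1+D^{3/2}(D-1)^2)(D-2).
\]
The main term of $(D-1)C(D)$ is then $D^{3/2}(D-1)^3(D-2)$, and it must be compared with $p(D-2)>D^{9/2}(D-2)$. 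Dividing by $D-2$, the goal becomes
\[
 (D-1)\bigl[(D+1+D^{3/2}(D-1)^2)+1/(D-2)\bigr]\le D^{9/2}.
\]
Here $D^{3/2}(D-1)^3=D^{9/2}(1-1/D)^3$ leaves a slack of roughly $3D^{7/2}$, which easily absorbs $(D-1)(D+2)$.

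The only delicate step is this final elementary inequality at small odd $D$ ($D=3,5$). I would check these cases directly using the exact value $t(D)=1$ for prime $D$ (so $B(3)=5$ and $C(3)=1+8=9$; for $p>3^{4.5}\approx140$ the right side is at least $p/2$, which exceeds $9$) rather than the crude estimate $t(D)\le D^{3/2}$. The general odd case then follows from the slack computation above.
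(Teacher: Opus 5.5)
Your proposal is correct and follows the paper's proof. The affine case is quoted from Lemma~\ref{lem:unique-degenerate} and Proposition~\ref{prop:affine-restrictions}; otherwise Theorems~\ref{thm:trichotomy} and~\ref{thm:uniform-bad} together with Proposition~\ref{prop:small-pencil} give $C(D)$, and one checks $C(D)\le\frac{D-2}{D-1}p$ for $p>D^{9/2}$. The paper does this last step uniformly via $D+B(D)<D^{7/2}$. Your slack estimate $D^{3/2}(3D^2-3D+1)\ge(D-1)(D+2)$ already holds for every $D\ge3$, so the separate check at $D=3,5$ is unnecessary.
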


\begin{proof}
If $r(v)=1$ for some $v$, the assertions follow from
Lemma~\ref{lem:unique-degenerate} and
Proposition~\ref{prop:affine-restrictions}. Otherwise, the
non-cylindrical hypothesis excludes $\partial_vf=0$, so $r(v)\ge2$
for every $v$. By Theorems~\ref{thm:trichotomy} and
\ref{thm:uniform-bad}, at most $D+B(D)$ directions contribute.
Each contributes at most $D-2$ by Proposition~\ref{prop:small-pencil}.
Proposition~\ref{prop:reduction} therefore gives $|\NS_S|\le C(D)$.

Since $t(D)\le D^{3/2}$, we have
\[
 D+B(D)\le D+D^{3/2}(D-1)^2+1<D^{7/2}
 \qquad(D\ge3),
\]
the last inequality following from $D^{3/2}(2D-1)>D+1$.
Put $\alpha=(D-2)/(D-1)$. Then
\[
 \alpha D^{9/2}-\bigl(1+(D-2)D^{7/2}\bigr)
 =\frac{D-2}{D-1}D^{7/2}-1>0.
\]
Thus $p>D^{9/2}$ implies $C(D)<\alpha p$, whereas
$p+1-\lceil p/(D-1)\rceil\ge\alpha p$.
This proves \eqref{eq:quant-bound} also in the case without a
direction of restriction degree one.
\end{proof}

\begin{corollary}\label{cor:threshold}
Let $D\ge3$. If $p>\max\{D^4,C(D)\}$, then every graph of actual
degree $D$ with $|\NS_S|\ge p$ is a cylinder. In particular, $p>D^4$
suffices when $D$ is even or prime. If $p>D^4$ and a non-cylindrical
graph of actual degree $D$ satisfies $|\NS_S|>C(D)$, then it has a
representation $f(u,s)=c_1(u)s+c_0(u)$, and all its non-determined
directions lie on one projective line.
\end{corollary}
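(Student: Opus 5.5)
The corollary is read off from Theorem~\ref{thm:quant} together with a computation of $B(D)$ in the two special cases. Let $S$ be a graph of actual degree $D\ge3$ with $|\NS_S|\ge p$, and assume $p>\max\{D^4,C(D)\}$. Suppose $S$ is not a cylinder. Since $p>D^4$, Theorem~\ref{thm:quant} applies and leaves two cases. If $r(v)\ge2$ for every direction $v$, then $|\NS_S|\le C(D)<p$, contradicting $|\NS_S|\ge p$. Otherwise some direction has $r(v)=1$. Then Proposition~\ref{prop:affine-restrictions} gives $|\NS_S|\le p+1-\lceil p/(D-1)\rceil<p$, using $\lceil p/(D-1)\rceil\ge2$, which again contradicts $|\NS_S|\ge p$. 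Hence $S$ is a cylinder.

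For the second assertion it suffices to check that $C(D)\le D^4$ when $D$ is even or prime, so that $p>D^4$ already forces $p>C(D)$.
\begin{itemize}
\item If $D$ is even, then $B(D)=0$ by \eqref{eq:BD}, so $C(D)=1+D(D-2)<D^4$.
\item If $D$ is an odd prime, then $t(D)=1$ (Remark~\ref{rem:degrees}), so $B(D)=(D-1)^2+1$ and
\[
 C(D)=1+\bigl(D+(D-1)^2+1\bigr)(D-2)=1+(D^2-D+2)(D-2).
\]
This is at most $D^3<D^4$ for $D\ge3$: at $D=3$ it equals $9$, and in general it is a cubic with leading term $D^3$ and negative quadratic correction $-3D^2$. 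This elementary check is the only computation in the proof.
\end{itemize}

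For the last assertion, assume $p>D^4$ and let $S$ be non-cylindrical with $|\NS_S|>C(D)$. By the first case of Theorem~\ref{thm:quant}, not every direction can have $r(v)\ge2$. So there is a direction with $r(v)=1$, and in adapted coordinates $f(u,s)=c_1(u)s+c_0(u)$. By Proposition~\ref{prop:affine-restrictions}, every $A_w$ with $w\ne v$ is surjective. Consequently all non-determined directions lie on the projective line $\ell_v$.

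There is no real obstacle. The only point needing care is the numerical verification that $C(D)\le D^4$ in the even and prime cases.
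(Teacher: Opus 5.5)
Your proposal is correct and follows essentially the same route as the paper: the first alternative of Theorem~\ref{thm:quant} gives $|\NS_S|\le C(D)<p$, Proposition~\ref{prop:affine-restrictions} handles the case $r(v)=1$ and also yields the final assertion, and the special cases reduce to $C(D)=(D-1)^2$ for even $D$ and $C(D)=D^3-3D^2+4D-3<D^4$ for odd prime $D$. Your inequality $C(D)\le D^3$ in the prime case amounts to $3D^2-4D+3>0$, which holds for every $D$.
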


\begin{proof}
Theorem~\ref{thm:quant} gives $|\NS_S|<p$ in the first case if
$p>C(D)$; Proposition~\ref{prop:affine-restrictions} gives it in the
second case. If $D$ is even, then $B(D)=0$ and
$C(D)=1+D(D-2)<D^4$. If $D$ is an odd prime, then $t(D)=1$ and
$C(D)=D^3-3D^2+4D-3<D^4$.
The final assertion follows from the first alternative of
Theorem~\ref{thm:quant} and Proposition~\ref{prop:affine-restrictions}.
\end{proof}

\begin{proof}[Proof of Theorem~\ref{thm:all-degrees}]
Constant graphs are planes. Degrees one and two are covered by
Lemma~\ref{lem:lowdegree}. Suppose that the actual degree is
$3\le d\le D$. If the graph is not a cylinder and $p>D^{9/2}$,
then $p>d^{9/2}$ and Theorem~\ref{thm:quant} gives
\[
 |\NS_S|\le p+1-\left\lceil\frac p{d-1}\right\rceil\le p-1,
\]
because $p>d-1$. Hence $|\NS_S|\ge p$ forces $S$ to be a cylinder.
\end{proof}

\begin{theorem}\label{thm:sharp}
Let $D\ge3$, $p>D$, $f(x,y)=x^{D-1}y$ and let $S$ be its graph. Then $S$
is not a cylinder and
\[
 |\NS_S|=1+(p-1)\Bigl(1-\frac{1}{\gcd(D-1,\,p-1)}\Bigr).
\]
If $(D-1)\mid(p-1)$, the right-hand side is
$1+(p-1)\frac{D-2}{D-1}$.
\end{theorem}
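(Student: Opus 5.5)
The graph of $f=x^{D-1}y$ is already in the shape treated by Proposition~\ref{prop:affine-restrictions}. The plan is to use the vertical direction $v=(0,1)$, so that $(u,s)=(x,y)$ are adapted coordinates. In these coordinates
\[
 f(u,s)=c_1(u)s+c_0(u),\qquad c_1(u)=u^{D-1},\quad c_0=0,
\]
so $r(v)=1$ and $A_v=u^{D-1}$ by Theorem~\ref{thm:trichotomy}(L).

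First I check that $S$ is not a cylinder. By Lemma~\ref{lem:cylinder}, this amounts to showing that no $A_w$ is constant. For $w=v$ we have $A_v=u^{D-1}$, which is nonconstant because $D\ge3$. For $w=(1,b)$, the leading form of $A_w$ is $\widetilde A_w$, and Lemma~\ref{lem:degree}(iv) says $\widetilde A_w\equiv0$ only when the leading form is a $D$-th power of a linear form. Here the leading form is $x^{D-1}y$, which is not such a power since $D\ge 2$. Hence $A_w$ has degree $D-1\ge2$ and is nonconstant.

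Next, Proposition~\ref{prop:affine-restrictions} applies directly. It says that every $A_w$ with $w\ne v$ is surjective, and that
\[
 |\NS_S|=1+p-|c_1(\Fp)|.
\]
It remains to count the image of $u\mapsto u^{D-1}$ on $\Fp$. The value $0$ is attained. On $\Fp^*$, which is cyclic of order $p-1$, the map $u\mapsto u^{D-1}$ is a homomorphism whose kernel has order $\gcd(D-1,p-1)$. Its image therefore has $(p-1)/\gcd(D-1,p-1)$ elements. This gives
\[
 |c_1(\Fp)|=1+\frac{p-1}{\gcd(D-1,p-1)},
\]
and substituting yields
\[
 |\NS_S|=p-\frac{p-1}{\gcd(D-1,p-1)}=1+(p-1)\Bigl(1-\frac1{\gcd(D-1,p-1)}\Bigr).
\]
When $(D-1)\mid(p-1)$ the gcd equals $D-1$, which gives the stated specialisation.

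No step is a real obstacle. The only points that need care are checking that the hypotheses of Proposition~\ref{prop:affine-restrictions} hold, namely $D<p$ and the non-cylindrical condition, and remembering to include the value $0$ when counting the image of $c_1$.
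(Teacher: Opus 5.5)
Your proof is correct and is essentially the paper's argument. The vertical pencil omits exactly $p-|\{x^{D-1}:x\in\Fp\}|$ values and every other pencil is surjective, so Proposition~\ref{prop:reduction} gives the count. The only difference is packaging. The paper checks surjectivity of the other pencils directly from $A_{(1,s)}(0,y,1)=y+s$ and deduces non-cylindricity at the end, since no $A_v$ is constant. You instead first establish non-cylindricity via Lemma~\ref{lem:degree}(iv) so that Proposition~\ref{prop:affine-restrictions} can be invoked, which is legitimate and not circular.
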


\begin{proof}
Let $v^{*}=(0,1)$. Then
$A_{v^{*}}=\bigl(x^{D-1}(y+t)-x^{D-1}y\bigr)/t=x^{D-1}$, so
$\Val(A_{v^{*}})=\{x^{D-1}:x\in\Fp\}$ and
\[
 |\Val(A_{v^{*}})|=1+\frac{p-1}{\gcd(D-1,p-1)},
 \qquad
 \mu(v^{*})=(p-1)\Bigl(1-\frac1{\gcd(D-1,p-1)}\Bigr).
\]
For every other direction, write $v=(1,s)$. Setting $x=0$ and $t=1$ gives
$A_{(1,s)}(0,y,1)=y+s$, so $A_{(1,s)}$ is surjective and $\mu(1,s)=0$.
Proposition~\ref{prop:reduction} gives the formula. Finally no $A_v$ is
constant, and Lemma~\ref{lem:cylinder} shows that $S$ is not a cylinder.
\end{proof}

\begin{remark}\label{rem:L-is-extremal}
When $p\equiv1\pmod{D-1}$, the value in Theorem~\ref{thm:sharp} equals
$p+1-\lceil p/(D-1)\rceil$. Thus \eqref{eq:quant-bound} is attained
for every such prime in its range of validity.
More generally, Proposition~\ref{prop:affine-restrictions} determines
the number of non-determined directions for every non-cylindrical
graph $f(u,s)=c_1(u)s+c_0(u)$ solely from the value set of $c_1$.
\end{remark}

\section{Concluding remarks and open problems}\label{sec:open}

The threshold $p>D^{9/2}$ follows from the bound
$t(D)\le D^{3/2}$ and the separation of directions with affine
restrictions. For graphs of actual degree $D$, Corollary~\ref{cor:threshold} gives the more precise
condition $p>\max\{D^4,C(D)\}$. It remains to determine whether a
linear threshold $p>cD$, with an absolute constant $c$, suffices.
In particular, does the weak cylinder conjecture hold for every
graph of total degree $D<p$? Improving the direction count alone
does not remove the condition $p>D^4$ used for exceptionality and
the point count.

A second problem is to determine the largest possible value of
$|\BB_{\rm ar}(f)|$ for a non-cylindrical graph of actual degree $D$.
The estimate $t(D)(D-1)^2+1$ treats the ordered prime factorizations
separately. The power components of degree at least five are already
included by taking zero Dickson parameter. The first degree with two
different orders of distinct prime factors is fifteen, with orders
$3\circ5$ and $5\circ3$. Intersections between the coefficient
conditions for different orders may improve the estimate.

Finally, for fixed $D\ge3$ and $p>D^{9/2}$, put
\[
 M_D(p)=\max\{\,|\NS_S|:\ S=\operatorname{graph}(f),\
             \deg f=D,\ S\text{ is not a cylinder}\,\}.
\]
We have $M_D(p)\le p+1-\lceil p/(D-1)\rceil$, with equality when
$p\equiv1\pmod{D-1}$. Determine $M_D(p)$ for the other residue
classes and classify the cases of equality. By
Corollary~\ref{cor:threshold}, a value larger than $C(D)$ forces
a representation $f(u,s)=c_1(u)s+c_0(u)$; in this case the question
reduces to the size of $c_1(\Fp)$. This connects the extremal problem
to univariate polynomials with small value sets.

\appendix
\section{Homogeneous difference quotients}\label{sec:appendix}

We record a criterion for the leading form of $A_v$ to be a perfect
power. It is independent of the proof of Theorem~\ref{thm:all-degrees}.
Throughout this appendix, $f\in\Fp[x,y]$ has degree $D<p$,
$g=f_D\ne0$, and $\widetilde A_v$ is the homogeneous component of
degree $D-1$ of $A_v$.

\begin{lemma}\label{lem:identity}
For every $v$,
\[
 A_v+t\,\frac{\partial A_v}{\partial t}=(\partial_vf)(P+tv),
 \qquad
 \widetilde A_v+t\,\frac{\partial\widetilde A_v}{\partial t}
   =(\partial_vg)(P+tv),
\]
where $P=(x,y)$. In particular the right-hand side of the second
identity, when nonzero, is a scalar multiple of a product of $D-1$ linear
forms over $\Fpbar$. If $\widetilde A_v\ne0$, every repeated irreducible
factor of $\widetilde A_v$ over $\Fpbar$ is linear.
\end{lemma}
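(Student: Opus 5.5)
The plan is to differentiate the defining identity $tA_v=f(P+tv)-f(P)$ with respect to $t$. The product rule gives
\[
 A_v+t\,\frac{\partial A_v}{\partial t}=\frac{\partial}{\partial t}\bigl(f(P+tv)\bigr)=(\partial_vf)(P+tv),
\]
by the chain rule. This is a polynomial identity in $\Fp[x,y,t]$, so it holds formally. The same computation with $g$ in place of $f$ uses the formula of Lemma~\ref{lem:degree}, namely $t\widetilde A_v=g(P+tv)-g(P)$, and yields the second identity. Alternatively, one can take the homogeneous component of degree $D-1$ of the first identity. The operator $1+t\partial_t$ preserves total degree, and the degree-$(D-1)$ part of $(\partial_vf)(P+tv)$ is $(\partial_vg)(P+tv)$.

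Next, $\partial_vg$ is a binary form of degree $D-1$ in $(x,y)$. If it is nonzero, it factors over $\Fpbar$ as a scalar times a product of $D-1$ linear forms $\ell_i(x,y)$. Substituting $P+tv$ gives the factorisation $\prod_i(\ell_i(x,y)+\ell_i(v)t)$, which is a product of linear forms in $(x,y,t)$. This proves the middle claim.

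For the final claim, suppose $\widetilde A_v\ne0$ and that $h^2$ divides $\widetilde A_v$ for some irreducible $h$ over $\Fpbar$. Write $\widetilde A_v=h^2k$. Then
\[
 \widetilde A_v+t\,\partial_t\widetilde A_v=h\bigl(hk+2t(\partial_th)k+th\,\partial_tk\bigr),
\]
so $h$ divides $(\partial_vg)(P+tv)$.

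There are two cases.
\begin{itemize}
\item If $(\partial_vg)(P+tv)\ne0$, then by unique factorisation $h$ is associate to one of the linear factors, so $h$ is linear.
\item If $\partial_vg=0$, then the second identity reads $(1+t\partial_t)\widetilde A_v=0$. On a monomial with $t$-exponent $j$, this operator acts as multiplication by $1+j$. Since $\deg\widetilde A_v\le D-1<p-1$, we have $1+j\ne0$ in $\Fp$ for every exponent that occurs. This forces $\widetilde A_v=0$, a contradiction. Equivalently, Lemma~\ref{lem:degree}(iv) excludes this case.
\end{itemize}

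The main obstacle is this degenerate case $\partial_vg=0$, together with checking that $1+j$ is invertible, which uses $D<p$. The rest is formal.
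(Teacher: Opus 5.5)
Your proof is correct and follows the paper's argument: you differentiate $tA_v=f(P+tv)-f(P)$, factor $\partial_vg$ over $\Fpbar$, and then use $h\mid \widetilde A_v+t\,\partial_t\widetilde A_v$ together with unique factorisation. The only difference is how you exclude $\partial_vg=0$: the paper changes coordinates to $v=(0,1)$ and shows that $g_y=0$ forces $g=a_0x^D$, hence $\widetilde A_v=0$, whereas you note that $1+t\partial_t$ acts on $t^j$ by the unit $1+j$ (since $j\le D-1\le p-2$), which gives the same contradiction directly from the identity.
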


\begin{proof}
By the definition of $A_v$ we have $tA_v=f(P+tv)-f(P)$, and by
Lemma~\ref{lem:degree} we have $t\widetilde A_v=g(P+tv)-g(P)$.
Differentiate the first identity with respect to $t$: on the left
$\partial_t(tA_v)=A_v+t\,\partial_tA_v$, while on the right $f(P)$ does
not involve $t$ and the chain rule gives
\[
 \frac{\partial}{\partial t}f(x+tv_1,y+tv_2)
 =v_1f_x(P+tv)+v_2f_y(P+tv)=(\partial_vf)(P+tv).
\]
The same computation applied to the second identity gives the second
formula.

Assume now $\partial_vg\ne0$. Being a nonzero binary form of degree
$D-1$, it splits over $\Fpbar$ as
$\partial_vg=c\prod_{k=1}^{D-1}L_k$ with $c\ne0$ and $L_k$
linear forms in $x,y$. Each $L_k$ is linear, so
$L_k(P+tv)=L_k(P)+L_k(v)t$, and therefore
\[
 (\partial_vg)(P+tv)=c\prod_{k=1}^{D-1}
   \bigl(L_k(P)+L_k(v)t\bigr)
\]
is a scalar multiple of a product of $D-1$ linear forms of
$\Fpbar[x,y,t]$. This is the second assertion.

For the factor assertion, assume $\widetilde A_v\ne0$; we first check
that $\partial_vg\ne0$. After a linear change of the coordinates $x,y$ we
may assume $v=(0,1)$, so that $\partial_v=\partial_y$: such a change
replaces $g$, $\widetilde A_v$ and $\partial_vg$ by the corresponding
objects, so it affects neither hypothesis nor conclusion. Write
$g=\sum_{i=0}^{D}a_ix^{D-i}y^{i}$. If $g_y=0$, then $ia_i=0$ for
$1\le i\le D$, and $D<p$ gives $a_i=0$ for every $i\ge1$, that is,
$g=a_0x^{D}$; but then $t\widetilde A_v=a_0x^{D}-a_0x^{D}=0$, against the
assumption. Hence $\partial_vg\ne0$, and the previous paragraph applies.

Let finally $F$ be an irreducible factor of $\widetilde A_v$ over
$\Fpbar$ with $F^2\mid\widetilde A_v$, say $\widetilde A_v=F^2H$. Then
$\partial_t\widetilde A_v=F\bigl(2(\partial_tF)H+F\,\partial_tH\bigr)$, so
$F$ divides both $\widetilde A_v$ and $\partial_t\widetilde A_v$, hence
also their combination
$\widetilde A_v+t\,\partial_t\widetilde A_v=(\partial_vg)(P+tv)$. The
latter is a nonzero product of linear forms and $\Fpbar[x,y,t]$ is a
unique factorisation domain, so the irreducible $F$ divides one of those
factors and is therefore linear.
\end{proof}

\begin{theorem}\label{thm:power}
Let $3\le D<p$, let $g$ be the leading form of $f$, let $v\in\PP^1(\Fp)$ and
let $\widetilde A_v$ be the degree-$(D-1)$ homogeneous component of
$A_v$. Then the following are equivalent:
\begin{enumerate}
\item[\textup{(i)}] $\widetilde A_v$ is a perfect $e$-th power for some
$e\ge2$, or $\widetilde A_v\equiv0$;
\item[\textup{(ii)}] $\partial_v^2g\equiv0$;
\item[\textup{(iii)}] $g=\ell^{\,D-1}(\alpha\ell+\beta\ell')$ for linear
forms $\ell,\ell'\in\Fp[x,y]$ with $\ell\ne0$ and $\ell(v)=0$, i.e.\ $g$ has a root of multiplicity
at least $D-1$ and $v$ is its direction.
\end{enumerate}
\end{theorem}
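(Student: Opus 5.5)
I will prove the cycle (ii)$\Leftrightarrow$(iii) first, then (ii)$\Rightarrow$(i) and (i)$\Rightarrow$(ii), working in coordinates adapted to $v$. After an $\Fp$-linear change of variables take $v=(0,1)$, so $\partial_v=\partial_y$, and write $g=\sum_{i=0}^{D}a_ix^{D-i}y^i$.

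\emph{(ii)$\Leftrightarrow$(iii).} Since $D<p$, the falling factorials $i(i-1)$ with $i\le D$ are nonzero in $\Fp$. Hence $\partial_y^2g=0$ forces $a_i=0$ for $i\ge2$, so $g=x^{D-1}(a_0x+a_1y)$. This is (iii) with $\ell=x$, which satisfies $\ell(v)=0$, and $\ell'=y$. Conversely, if $\ell(v)=0$ then $\partial_v\ell=0$, and $\partial_v^2$ of $\ell^{D-1}(\alpha\ell+\beta\ell')$ vanishes because $\partial_v\ell'$ is a constant and $\partial_v^2\ell'=0$.

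\emph{(ii)$\Rightarrow$(i).} With $g=x^{D-1}(a_0x+a_1y)$, Lemma~\ref{lem:degree}(iii) gives
\[
t\widetilde A_v=a_1x^{D-1}\bigl((y+t)-y\bigr),
\]
so $\widetilde A_v=a_1x^{D-1}$. This is either $0$ or a perfect $(D-1)$-th power, and $D-1\ge2$.

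\emph{(i)$\Rightarrow$(ii).} If $\widetilde A_v\equiv0$, Lemma~\ref{lem:degree}(iv) gives $g=c\ell^D$ with $\ell(v)=0$, and then $\partial_v^2g=0$. So assume $\widetilde A_v=cU^e$ with $e\ge2$. By Lemma~\ref{lem:identity}, every repeated irreducible factor of $\widetilde A_v$ is linear. Hence $U$ is a product of linear forms in $x,y,t$, and so $\widetilde A_v$ splits completely over $\Fpbar$ with every factor of multiplicity at least $2$.

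Now apply Lemma~\ref{lem:identity}: the expression $\widetilde A_v+t\partial_t\widetilde A_v=(\partial_vg)(P+tv)$ is divisible by $U^{e-1}$. I will exploit the structure along lines in the $t$-direction. Set $y=0$ and view $\widetilde A_v$ as a binary form in $(x,t)$. By Lemma~\ref{lem:degree}(iii),
\[
t\widetilde A_v(x,0,t)=g(x,t)-g(x,0).
\]
Since $g(x,0)=a_0x^D$, this equals $\sum_{i\ge1}a_ix^{D-i}t^i$, and therefore
\[
\widetilde A_v(x,0,t)=\sum_{i\ge1}a_ix^{D-i}t^{i-1}.
\]
Setting $x=0$ instead gives $\widetilde A_v(0,y,t)=\bigl(g(0,y+t)-g(0,y)\bigr)/t=a_D\bigl((y+t)^D-y^D\bigr)/t$. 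Because $D<p$, the polynomial $\bigl((y+t)^D-y^D\bigr)/t$ has distinct linear factors, as in the proof of Lemma~\ref{lem:quotient-noncomposite}. For $D\ge3$ it has degree $D-1\ge2$, so it is squarefree with at least two factors and cannot be a perfect power. Since a restriction of a perfect power is again a perfect power, this forces $a_D=0$.

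I then intend to iterate: restrict to other planes $x=\kappa y$ to show that the restricted binary forms are powers. The main obstacle is turning ``all such restrictions are $e$-th powers'' into $a_i=0$ for every $i\ge2$. A cleaner route may be to argue that the linear factors of $U$ must each divide $(\partial_vg)(P+tv)=c\prod_k\bigl(L_k(P)+L_k(v)t\bigr)$. Comparing the $t$-degree of each factor then shows that the linear factors of $\widetilde A_v$ are among the $L_k(P)+L_k(v)t$, each appearing with multiplicity one more than in $\partial_vg$. A degree count then forces $\partial_vg$ to have a single factor $L$ of multiplicity $D-1$ with $L(v)=0$. From this I would deduce $\partial_v^2g=0$: write $\partial_y g=c'x^{D-1}$ and integrate, using $D<p$, to get $g=x^{D-1}(a_0x+a_1y)$. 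This degree comparison is the step I expect to need the most care.
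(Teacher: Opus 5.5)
Your arguments for (ii)$\Leftrightarrow$(iii) and (ii)$\Rightarrow$(i) are correct and coincide with the paper's. The start of (i)$\Rightarrow$(ii) is also sound. Each irreducible factor of $U$ occurs in $\widetilde A_v$ with multiplicity at least $e\ge2$, so by Lemma~\ref{lem:identity} it is linear. It divides $U^{e-1}$, hence $W:=(\partial_vg)(P+tv)$, so it is one of the forms $\widehat L_k=L_k(P)+L_k(v)t$. Your restriction to $x=0$ is correct too, but it only yields $a_D=g(v)=0$.

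The gap is the step you flag yourself. The claim that each factor of $\widetilde A_v$ occurs with multiplicity one more than in $\partial_vg$ is false. Differentiating, or using $U^{e-1}\mid W$, only gives an inequality: the multiplicity in $\widetilde A_v$ is at most the multiplicity in $W$ plus one. In the model case $g=x^{D-1}y$, $v=(0,1)$, one has $\widetilde A_v=\partial_vg=W=x^{D-1}$, with equal multiplicities. Nor can any degree count force $\partial_vg=c'L^{D-1}$ with $L(v)=0$: counting multiplicities does not see whether $L_k(v)$ vanishes, so factors $\widehat L_k$ with $L_k(v)\ne0$ are never excluded. As written, (i)$\Rightarrow$(ii) is not proved.

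The missing idea is to fix the multiplicities exactly by setting $t=0$, and then feed the result back into the identity of Lemma~\ref{lem:identity}.
First note $W\ne0$: otherwise $\partial_vg\equiv0$ gives $g=c\ell^D$ with $\ell(v)=0$, hence $\widetilde A_v\equiv0$ by Lemma~\ref{lem:degree}(iv). Write $U=\gamma\prod_j\widehat L_j^{m_j}$ and $\partial_vg=\prod_jL_j^{\nu_j}$, with the $L_j$ pairwise non-proportional. Since $\widetilde A_v(x,y,0)=\partial_vg$ by Lemma~\ref{lem:degree}(i), you get $c\gamma^e\prod_jL_j^{em_j}=\prod_jL_j^{\nu_j}$. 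Hence $\nu_j=em_j$, and $\widetilde A_v=\kappa W$ for a nonzero scalar $\kappa$.

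Now substitute this into $\widetilde A_v+t\,\partial_t\widetilde A_v=W$, using $\partial_tW=(\partial_v^2g)(P+tv)$. This gives $\kappa\,t\,(\partial_v^2g)(P+tv)=(1-\kappa)(\partial_vg)(P+tv)$. After the invertible substitution $(x,y,t)\mapsto(P+tv,t)$, the right-hand side is free of $t$ while the left-hand side is divisible by $t$. So both vanish, whence $\kappa=1$ and $\partial_v^2g\equiv0$. This is the paper's argument, and it makes your planned restrictions to other planes unnecessary.
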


\begin{proof}
(iii)$\Rightarrow$(i). Choose coordinates with $\ell=x$ and $v=(0,1)$, so
that $g=x^{D-1}(\alpha x+\beta y)$ after renaming the coefficients. Then
$t\widetilde A_v=x^{D-1}\bigl(\alpha x+\beta(y+t)\bigr)
 -x^{D-1}(\alpha x+\beta y)=\beta t\,x^{D-1}$, so
$\widetilde A_v=\beta x^{D-1}$, a perfect $(D-1)$-st power (and
$\equiv0$ if $\beta=0$, i.e.\ if $g=\alpha x^{D}$).

(iii)$\Leftrightarrow$(ii). In the same coordinates
$\partial_v^2g=\partial_y^2g$, and $\partial_y^2g=0$ means that $g$ has
degree at most $1$ in $y$, because $D<p$ implies
$k(k-1)\not\equiv0\pmod p$ for $2\le k\le D$. This says exactly
$g=x^{D-1}(\alpha x+\beta y)$.

(i)$\Rightarrow$(ii). If $\widetilde A_v\equiv0$ then $g=c\ell^{D}$ with
$\ell(v)=0$ by Lemma~\ref{lem:degree}(iv), and (ii) holds. So assume
$\widetilde A_v=cU^{e}$ with $e\ge2$ and $U$ a form of degree $(D-1)/e$.
Write $W:=(\partial_vg)(P+tv)$ and note $W\ne0$: otherwise
$\partial_vg\equiv0$, which forces $g=c\ell_v^{\,D}$ and hence
$\widetilde A_v\equiv0$ by Lemma~\ref{lem:degree}(iv). By
Lemma~\ref{lem:identity},
\[
 c\,U^{e-1}\bigl(U+e\,t\,\partial_tU\bigr)=W .
\]
Since $W$ is a product of linear forms and $e-1\ge1$, every irreducible
factor of $U$ is one of those linear forms; writing
$\partial_vg=\prod_jL_j^{\nu_j}$ with the $L_j$ pairwise
non-proportional and $\widehat L_j:=L_j(P)+L_j(v)t$
{(these are again pairwise non-proportional as linear forms in
$x,y,t$, since their restrictions to $t=0$ are)}, we get
$U=\gamma\prod_j\widehat L_j^{m_j}$. Restricting to $t=0$ and using
$\widetilde A_v|_{t=0}=\partial_vg$ (Lemma~\ref{lem:degree}(i)) gives
$c\gamma^{e}\prod_jL_j^{em_j}=\prod_jL_j^{\nu_j}$, whence
$\nu_j=em_j$ for all $j$ and therefore
\[
 \widetilde A_v=c\,U^{e}=\kappa\,W\qquad\text{for some }\kappa\ne0 .
\]
Substituting this into the identity of Lemma~\ref{lem:identity} and using
$\partial_tW=(\partial_v^2g)(P+tv)$ yields
\[
 \kappa\,t\,(\partial_v^{2}g)(P+tv)=(1-\kappa)\,(\partial_vg)(P+tv).
\]
As $(x,y,t)\mapsto(P+tv,t)$ is an invertible linear change of variables,
this is an identity of polynomials in $(Q,t)$ with $Q$ and $t$
independent: $\kappa\,t\,\partial_v^{2}g(Q)=(1-\kappa)\partial_vg(Q)$.
{The right-hand side does not involve $t$, while the left-hand side
is divisible by $t$; hence both sides vanish. Since $\partial_vg\ne0$,
the vanishing of the right-hand side gives $\kappa=1$, and then the
vanishing of the left-hand side gives $t\,\partial_v^2g(Q)\equiv0$, that
is, $\partial_v^2g\equiv0$.}
\end{proof}

\begin{remark}\label{rem:degree2power}
The restriction $D\ge3$ is necessary. For $g=xy$ and $v=(1,0)$ one has
$\partial_v^2g=0$ but $\widetilde A_v=y$, which is not a perfect $e$-th
power with $e\ge2$; moreover both coordinate directions satisfy
$\partial_v^2g=0$.
\end{remark}

\section*{Acknowledgements}

Both authors are members of the Gruppo Nazionale per le Strutture
Algebriche, Geometriche e le loro Applicazioni (GNSAGA) of the Istituto
Nazionale di Alta Matematica (INdAM), whose support is gratefully
acknowledged.

\end{document}